\newtheorem{theorem}{Theorem}[section]
\newtheorem{proposition}[theorem]{Proposition}
\newtheorem{remark}[theorem]{Remark}
\newtheorem{lemma}[theorem]{Lemma}
\newtheorem{corollary}[theorem]{Corollary}
\title{Minimal surfaces over the Pitot quadrilaterals} 
\author{Vladimir Dragovi\'c}
\address{Department of Mathematical Sciences,
The University of Texas at Dallas,
800 W Campbell Rd, Richardson, TX 75080, USA//Mathematical Institute SANU, Belgrade, Serbia}
\email{Vladimir.Dragovic@utdallas.edu}
\author{David Kalaj}
\address{University of Montenegro, Faculty of Natural Sciences and Mathematics,
Cetinjski put b.b., 81000 Podgorica, Montenegro}
\email{davidk@ucg.ac.me}
\date{\today}
\subjclass[2020]{58E12, 53A10, 31A05}
\keywords{Harmonic maps; Scherk surface; Gaussian curvature}
\begin{document}

\begin{abstract}
We develop a fully explicit framework for constructing Scherk-type minimal graphs over the Pitot quadrilaterals (i.e. such that the two pairs of opposite sides have the same total length).   For any Pitot quadrilateral  \(Q\), we first produce a harmonic diffeomorphism of the unit disk onto \(Q\), whose dilatation is the square of a M\"obius automorphism determined directly by the vertices of \(Q\).  Using this map as the  Weierstrass data, we obtain a minimal graph \(\Sigma\) whose Gauss map is a univalent M\"obius transformation and whose height function exhibits alternating blow-up behavior along opposite sides of \(Q\), mirroring the classical Scherk surfaces.  We further construct an associated canonical surface \(\Sigma^\diamond\), with the same boundary asymptotics, and prove a sharp curvature comparison theorem: at the harmonic center of \(Q\), among all bounded minimal graphs with matching normal direction and mixed derivative, \(\Sigma^\diamond\) uniquely maximizes the absolute Gaussian curvature.  This provides a complete and constructive description of Scherk-type minimal graphs over all, both convex or concave, Pitot quadrilaterals.
\end{abstract}

\maketitle 

\section{Introduction}

In this paper, we investigate \emph{Scherk-type minimal graphs} defined over quadrilateral domains. A Scherk-type minimal graph means: A minimal surface given as a single-valued graph over a domain whose boundary data take alternating infinite values, producing a surface asymptotic to a set of vertical planes—generalizing the structure of Scherk’s second surface.
Let a quadrilateral be denoted by
\[
Q = Q(b_1,b_2,b_3,b_4),
\]
where the vertices \(b_i \in \mathbb{R}^2\) are ordered counterclockwise.
A necessary condition for the existence of a Scherk-type minimal graph over \(Q\) is that the sums of the lengths of the opposite sides are equal, namely
\begin{equation}\label{eq:side_condition}
|b_1b_2| + |b_3b_4| = |b_2b_3| + |b_4b_1|.
\end{equation}
Quadrilaterals that satisfy \eqref{eq:side_condition} are known as \emph{the Pitot quadrilaterals}. (Pitot proved that if $Q$ is a convex tangential quadrilateral, then its sides satisfy \eqref{eq:side_condition} in 1725; Durrande proved the converse in 1815, see \cite{Be}.) 

While the classical Scherk surfaces correspond to convex quadrilaterals satisfying~\eqref{eq:side_condition} (see \cite{JENKINSSERRIN} and their application to the solution of Gaussian curvature conjecture \cite{kalaj2025gaussian}),
our analysis shows that a \emph{Scherk-type minimal graph can be constructed above every Pitot quadrilateral}, including concave ones.
Thus, the convexity of \(Q\) is not a necessary condition for the existence of such minimal graphs.

The resulting minimal surface
\[
\mathcal{T} : \mathbb{D} \to \mathbb{R}^3,\quad \mathbb{D}=\{z||z|<1\},
\]
has \emph{the Gaussian unit normal} \(N(z)\) lying entirely in the upper hemisphere.
This construction provides a natural geometric extension of classical Scherk surfaces and reveals a direct connection between the shape of \(Q\) and the analytic form of the corresponding minimal graph.

A related result was previously obtained by Bshouty and Hengartner~\cite{BshoutyHengartner1997},
but our approach refines and extends their framework by providing an \emph{explicit formula} for the unit normal \(N(z)\) expressed directly in terms of the vertices of \(Q\).
This explicit dependence clarifies the geometric structure and facilitates analytic study.

Moreover, the constructed minimal graphs satisfy the characteristic \emph{Scherk-type asymptotic property}:
if \(z \to \pm 1\) or \(z \to \pm e^{\imath p}\), then the third coordinate of the immersion \(T(z)\) tends to \(\pm\infty\).
This mirrors the asymptotic behavior of classical Scherk surfaces and confirms that Scherk’s construction naturally extends to all, {\it both convex and concave}, Pitot quadrilateral domains. Let us observe that a Pitot quadrilateral cannot be self-intersecting, except in degenerate cases where two vertices coincide. 

Our results therefore unify and broaden the classical theory of minimal surfaces, linking boundary geometry with analytic representation.
They demonstrate that the class of domains supporting Scherk-type minimal graphs coincides precisely with the class of Pitot quadrilaterals.

The next three theorems constitute the core of our results and provide a complete, fully explicit description of Scherk-type minimal graphs over Pitot quadrilaterals.  Theorem~\ref{teo1} establishes the existence of a harmonic diffeomorphism from the unit disk onto any given Pitot quadrilateral~\(Q\), with the dilatation determined by a M\"obius automorphism whose parameters depend directly and explicitly on the vertices of~\(Q\).  Building on this, Theorem~\ref{teo2} shows that these harmonic maps generate minimal graphs over~\(Q\) whose Weierstrass data---and in particular the Gauss map---can be written in a closed form and exhibit the characteristic alternating blow-up behavior of Scherk-type surfaces along opposite sides of the quadrilateral.  Finally, Theorem~\ref{etreta} yields a sharp curvature comparison: among all bounded minimal graphs over~\(Q\) sharing the same normal direction and mixed derivative at the harmonic center, the canonical surface~\(\Sigma^\diamond\) constructed in Theorem~\ref{teo2} has a strictly maximal absolute Gaussian curvature at this point.

\begin{theorem}\label{teo1}
Let \(Q\) be a Pitot quadrilateral with consecutive vertices \(b_1,b_2,b_3,b_4\).
Then there exists a parameter \(p=p(b_1,b_2,b_3,b_4)\in(0,\pi)\), such that the step function \(F:[0,2\pi)\to \mathbb{C}\), given by
\[
F(\theta)=
\begin{cases}
b_0=b_4, & \theta \in [0,\,p),\\[2pt]
b_1, & \theta \in [p,\,\pi),\\[2pt]
b_2, & \theta \in [\pi,\,\pi+p),\\[2pt]
b_3, & \theta \in [\pi+p,\,2\pi),
\end{cases}
\]
has the Poisson extension \(f=P[F]\)  to the unit disk \(\mathbb{D}\),
\begin{equation}\label{eq:repres}
f(z)=h(z)+\overline{g(z)},\qquad z\in\mathbb{D},
\end{equation}
that is a sense-preserving harmonic mapping with the following properties:

\begin{enumerate}
\item [(i)] The dilatation \(\omega=g'/h'\) satisfies
\[
\omega(z)=\phi(z)^2,
\]
where \(\phi\) is a Möbius automorphism of \(\mathbb{D}\), 
$$\phi(z) = X\frac{z-z_\circ}{1-z\bar{z_\circ}},$$ 
with $|X|=1$ and $|z_\circ|<1$ being explicitly given in the hyperbolic coordinates with respect to the hyperbola containing the points $b_2$ and $b_4$ and with the foci at the points $b_1$ and $b_3$.

\item [(ii)] The function $f$ is a harmonic diffeomorphism of the unit disk onto $Q$.
\end{enumerate}
\end{theorem}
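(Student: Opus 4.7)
\emph{Proof plan.} The overall strategy is to compute the Poisson extension \(f=P[F]\) in closed form, read off its canonical decomposition \(f=h+\overline g\), analyze the dilatation \(\omega=g'/h'\) as a rational function of small degree, and finally upgrade the resulting harmonic map to a diffeomorphism onto \(Q\).

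\emph{Step 1: Closed form of \(f\) and its dilatation.} Since \(F\) is a step function with four jump points \(\zeta_1=e^{ip},\;\zeta_2=-1,\;\zeta_3=-e^{ip},\;\zeta_4=1\) and jumps \(c_k=b_k-b_{k-1}\) (cyclic, with \(b_0=b_4\)) satisfying \(\sum c_k=0\), the Poisson integral is evaluated in terms of harmonic measures of the four arcs. This yields the decomposition \(f=h+\overline g\) explicitly, and differentiating gives \(h'\) and \(g'\) as rational functions of degree at most four with only simple poles at the \(\zeta_k\), whose residues are expressible in terms of the jumps \(c_k\). The antipodal symmetry \(\zeta_3=-\zeta_1\), \(\zeta_4=-\zeta_2\) then lets one rewrite \(\omega=g'/h'\) as a quotient of polynomials in \(z^{2}\), which is the algebraic origin of the squared structure in~(i).

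\emph{Step 2: Identifying \(\omega\) as the square of a M\"obius automorphism.} On each open arc between two consecutive jump points the boundary trace of \(f\) is constant, so the tangential derivative vanishes there; this forces \(|\omega|=1\) on \(\partial\mathbb D\). Combined with the rational form of \(\omega\) and the residue structure from Step~1, this exhibits \(\omega\) as a finite Blaschke product, and the symmetry noted above forces it to be a perfect square \(\omega=\phi^{2}\) with \(\phi\) a M\"obius automorphism of \(\mathbb D\). For each candidate \(p\in(0,\pi)\), the structure \(\omega=\phi^{2}\) then yields explicit formulas for \(z_\circ\) and \(X\) as functions of \((b_1,\dots,b_4,p)\). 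The extra requirement that \(f(\mathbb D)=Q\) (not merely that \(f=P[F]\) have the prescribed boundary trace) pins down \(p\) by a single scalar equation; the Pitot identity~\eqref{eq:side_condition} is precisely the condition on \((b_1,\dots,b_4)\) under which this equation admits a solution in \((0,\pi)\). Geometrically, \eqref{eq:side_condition} is equivalent to the existence of a hyperbola with foci at \(b_1,b_3\) passing through both \(b_2\) and \(b_4\), and the hyperbolic coordinates of \(b_2\) and \(b_4\) on this hyperbola furnish the closed-form expressions for \(p\) and \(z_\circ\) announced in the statement.

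\emph{Step 3: Diffeomorphism property.} Since \(\phi\) is an automorphism of \(\mathbb D\), \(|\omega(z)|=|\phi(z)|^2<1\) on \(\mathbb D\); the Jacobian \(|h'|^2-|g'|^2\) is therefore strictly positive, so \(f\) is sense-preserving and locally a diffeomorphism by Lewy's theorem. The boundary correspondence is a monotone map of \(\partial\mathbb D\) onto the polygonal curve \(\partial Q\). For convex \(Q\) the Rad\'o--Kneser--Choquet theorem immediately yields global injectivity and surjectivity onto \(Q\); for concave \(Q\) I follow the approach of Bshouty--Hengartner \cite{BshoutyHengartner1997}, in which global injectivity is obtained from a degree/argument-principle computation that exploits the explicit form \(\omega=\phi^{2}\). \textbf{The principal obstacle} is Step~2: pinning down the geometric identification of \((p,z_\circ)\) as hyperbolic coordinates relative to the hyperbola with foci \(b_1,b_3\) through \(b_2,b_4\). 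This is where the Pitot condition is used essentially, and it is what makes the entire construction explicit in terms of the vertices of~\(Q\).
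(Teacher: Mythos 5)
There is a genuine gap at the heart of Step~2. You claim that the antipodal symmetry of the jump points lets one write \(\omega=g'/h'\) as a quotient of polynomials in \(z^{2}\) and that this symmetry ``forces'' \(\omega\) to be a perfect square of a M\"obius automorphism. Neither assertion is correct. The symmetry only makes the common denominator \((1-z^{2})(e^{2\imath p}-z^{2})\) even; the numerators \(P_2\) and \(Q_2\) are general quadratics in \(z\) whose coefficients depend on the vertices and on \(p\). For a generic \(p\in(0,\pi)\) the unimodularity of \(\omega\) on the boundary arcs only shows that \(\omega\) is a degree-two Blaschke quotient with two \emph{distinct} zeros, not a square. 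The entire analytic content of Theorem~\ref{teo1}(i) is the choice of \(p\) making the discriminants of \(P_2\) and \(Q_2\) vanish simultaneously (equivalently \(Q_2(z')=0\) at the critical point \(z'\) of \(Q_2\)), which reduces to the scalar equation \(A\cos 2p+B\cos p+C=0\) and, after normalizing \(b_1=-1,\ b_3=1\), to \(\cos p=-1-B/(2A)\). The Pitot condition enters precisely here: parametrizing \(b_2,b_4\) on the common hyperbola with foci \(b_1,b_3\) gives \(\cos p=1-4/(1+\cosh(s-t))\in[-1,1]\), so the equation is solvable, and the same parametrization yields \(|X|=1\) and \(|z_\circ|<1\). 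You instead attribute the determination of \(p\) to the surjectivity requirement \(f(\mathbb D)=Q\) and assert the perfect-square structure holds ``for each candidate \(p\)''; this reverses the logic and leaves the key step unproved.

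Two smaller points. First, a rational function unimodular on \(\partial\mathbb D\) is a quotient of Blaschke products, not automatically a Blaschke product; you need \(|\omega|<1\) in \(\mathbb D\) (which follows only after \(|z_\circ|<1\) and \(|X|=1\) are established) to conclude it is one. Second, for part~(ii) the paper does not split into convex and concave cases: once \(\omega=\phi^{2}\) with \(\phi\) a M\"obius automorphism and \(h'\) zero-free in \(\mathbb D\) (its only zero \(1/\overline{z_\circ}\) lies outside the disk), the Sheil-Small theorem gives univalence uniformly. Your proposed route via Rad\'o--Kneser--Choquet plus a Bshouty--Hengartner degree argument for the concave case is plausible but is only cited, not carried out, and is heavier than necessary given the explicit form of the dilatation.
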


\begin{theorem}\label{teo2}
Let $Q=Q(b_1,b_2,b_3,b_4)$ be a Pitot quadrilateral.  Then there exists a minimal graph $\Sigma$ defined over the quadrilateral $Q$ 
given in the Enneper–Weierstrass form by
\[
\Sigma \;=\; \bigl\{\,(\Re f(z),\,\Im f(z),\,T(z)) : z\in\mathbb{D}\,\bigr\},\qquad
T(z) \;=\; 2\,\Im \int_0^z {\bf p}(\zeta)\,{\bf q}(\zeta)\,d\zeta,
\]
whose unit normal at the point $(f(z),T(z))$ is ${\bf q}(z)$. The Weierstrass data $({\bf p,q})$ can be chosen so that ${\bf q}$ is a Möbius automorphism of the unit disk and for $f$ from \eqref{eq:repres} ${\bf q}=\sqrt{g'/h'}$ and ${\bf p}=h'$. In particular, the Gauss map is a univalent diffeomorphism of the surface $\Sigma$ onto the open upper hemisphere of $\mathbb{S}^2$.

The surface $\Sigma^\diamond=\{(u,v, \mathbf{f}^\diamond(u,v)): (u,v)\in Q\}$, where $\mathbf{f}^\diamond=T(f^{-1})$, has the following Scherk-type asymptotics:
\[
\text{if } (u,v)\to \zeta\in (b_1,b_2)\cup(b_3,b_4) \text{ then } \mathbf{f}(u,v)\to +\infty,\]
\[ \text{if } (u,v)\to\zeta\in (b_4,b_1)\cup(b_2,b_3) \text{ then } \mathbf{f}(u,v)\to -\infty.
\]
\end{theorem}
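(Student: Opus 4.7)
The plan is to lift the harmonic diffeomorphism of Theorem~\ref{teo1} to a minimal immersion via the Enneper--Weierstrass construction, pass to a graph over $Q$ by inverting $f$, and then extract the prescribed Scherk-type blow-up of $\mathbf{f}^\diamond$ from the logarithmic singularities of the height integrand at the four jump points of the boundary data.  Concretely, writing $f=h+\overline{g}$ with $\omega = g'/h' = \phi^{2}$ as in Theorem~\ref{teo1}, the global single-valued square root makes the $1$-form $h'\phi\,d\zeta$ holomorphic on $\mathbb{D}$, so
\[
T(z)=2\,\Im\!\int_{0}^{z}h'(\zeta)\phi(\zeta)\,d\zeta
\]
is a well-defined real harmonic function.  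Setting $\mathbf{p}=h'$ and $\mathbf{q}=\phi$, the conformality relation $u_{z}^{2}+v_{z}^{2}+T_{z}^{2}=h'g'+(-i h'\phi)^{2}=0$ shows that $z\mapsto(\Re f,\Im f,T)$ is an isothermal minimal immersion whose unit normal has stereographic coordinate $\mathbf{q}(z)$.  Since $\mathbf{q}=\phi$ is a M\"obius automorphism of $\mathbb{D}$, one has $|\mathbf{q}|<1$ on $\mathbb{D}$, so the Gauss map $N$ takes values in, and is a univalent diffeomorphism onto, the open upper hemisphere.  Because $f$ is a diffeomorphism of $\mathbb{D}$ onto $Q$ by Theorem~\ref{teo1}(ii), the map $\mathbf{f}^\diamond:=T\circ f^{-1}$ is well defined on $Q$, and its graph is $\Sigma^\diamond$.

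\emph{Logarithmic singularities at the jump points.}  The Poisson representation of $f=P[F]$ for the piecewise-constant data $F$ of Theorem~\ref{teo1} yields
\[
h'(z)=\frac{i}{2\pi}\sum_{k=1}^{4}\frac{\Delta_{k}}{z-\zeta_{k}},
\]
with $\zeta_{1}=e^{ip}$, $\zeta_{2}=-1$, $\zeta_{3}=-e^{ip}$, $\zeta_{4}=1$ and jumps $\Delta_{1}=b_{1}-b_{4}$, $\Delta_{2}=b_{2}-b_{1}$, $\Delta_{3}=b_{3}-b_{2}$, $\Delta_{4}=b_{4}-b_{3}$.  Since $\phi$ is holomorphic across each $\zeta_{k}$, the product $h'\phi$ has a simple pole at $\zeta_{k}$ with residue $i\Delta_{k}\phi(\zeta_{k})/(2\pi)$, so taking $2\,\Im$ of the antiderivative gives
\[
T(z)=\frac{\Re\bigl(\Delta_{k}\phi(\zeta_{k})\bigr)}{\pi}\log|z-\zeta_{k}|-\frac{\Im\bigl(\Delta_{k}\phi(\zeta_{k})\bigr)}{\pi}\arg(z-\zeta_{k})+O(1)
\]
as $z\to\zeta_{k}$ from inside $\mathbb{D}$.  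The cluster set of $f$ at $\zeta_{k}$ is the closed segment $[b_{k-1},b_{k}]$ (with $b_{0}=b_{4}$), so this logarithmic divergence is transported through $f^{-1}$ to the corresponding side of $Q$: $\mathbf{f}^\diamond\to+\infty$ precisely when $\Re(\Delta_{k}\phi(\zeta_{k}))<0$, and $\mathbf{f}^\diamond\to-\infty$ precisely when $\Re(\Delta_{k}\phi(\zeta_{k}))>0$.

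\emph{Sign pattern --- the main obstacle.}  What remains is to verify that $\Re(\Delta_{k}\phi(\zeta_{k}))<0$ for $k=2,4$ (the sides $(b_{1},b_{2})$ and $(b_{3},b_{4})$, where $+\infty$ is asserted) and $\Re(\Delta_{k}\phi(\zeta_{k}))>0$ for $k=1,3$ (the sides $(b_{4},b_{1})$ and $(b_{2},b_{3})$).  Because $|\phi(\zeta_{k})|=1$, each $\phi(\zeta_{k})$ is an explicit unit complex number built from the M\"obius parameters $X,z_{\circ},p$ of Theorem~\ref{teo1}(i), which are in turn determined by the hyperbolic coordinates of $b_{2},b_{4}$ on the confocal hyperbola with foci $b_{1},b_{3}$.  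The four required inequalities then reduce, after trigonometric simplification, to the Pitot identity $|b_{1}b_{2}|+|b_{3}b_{4}|=|b_{2}b_{3}|+|b_{4}b_{1}|$.  This sign verification --- the geometric compatibility between the M\"obius data of Theorem~\ref{teo1} and the combinatorial orientation of $\partial Q$ --- is the step I expect to be the most intricate, since all of the preceding analytic ingredients are standard consequences of the Enneper--Weierstrass formalism once the harmonic diffeomorphism and its dilatation are in hand.
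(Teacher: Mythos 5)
Your construction of $\Sigma$ (taking $\mathbf{p}=h'$, $\mathbf{q}=\phi$, checking conformality, and reading off the Gauss map as a M\"obius automorphism of $\mathbb{D}$, hence a bijection onto the upper hemisphere) and your strategy for the asymptotics (simple poles of $\mathbf{p}\mathbf{q}$ at the four jump points producing logarithmic behavior of $T=2\,\Im\int\mathbf{p}\mathbf{q}$, transported to the sides of $Q$ through $f^{-1}$) coincide with the paper's route. The gap is that you stop exactly where the substantive content of the second half begins: you never establish the sign pattern of $\Re\bigl(\Delta_k\phi(\zeta_k)\bigr)$, only announce that it ``reduces to the Pitot identity'' and is ``the most intricate step.'' Two things are missing. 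First, you must rule out $\Re\bigl(\Delta_k\phi(\zeta_k)\bigr)=0$, since otherwise there is no blow-up at all on that side. This part is cheap: since $\mathrm{res}_{\zeta_k}g'=-\overline{c_k}$ and $\mathrm{res}_{\zeta_k}h'=c_k$, one has $\bigl(\mathrm{res}_{\zeta_k}h'\phi\bigr)^2=c_k^2\,\phi(\zeta_k)^2=-|c_k|^2<0$, so each residue of $\mathbf{p}\mathbf{q}$ equals $\pm\imath|c_k|$; in particular $\Delta_k\phi(\zeta_k)$ is real and nonzero and your $\arg$ term is absent. But this observation does not appear in your write-up.

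Second, and this is the real work, you must determine which of the two square roots $\pm\imath|c_k|$ occurs at each $\zeta_k$ and show that the signs alternate as $+,-,+,-$ around $1,e^{\imath p},-1,-e^{\imath p}$. Nothing in the residue calculus forces this, and it is not a formal consequence of the Pitot identity alone: it depends on the specific choices of $p$, $z_\circ$ and $X$ made in Theorem~\ref{teo1}. The paper closes this by rewriting $g'$ and $h'$ in the factored form \eqref{AB}, so that $K=\mathbf{p}\mathbf{q}=C\,(z-z_\circ)(1-z\overline{z_\circ})/\bigl((1-z^2)(e^{2\imath p}-z^2)\bigr)$; at each pole the numerator collapses to a positive quantity $|1\mp z_\circ|^2$ or $|1\mp z_\circ e^{-\imath p}|^2$, the identity \eqref{KeyI} shows that $C/(e^{2\imath p}-1)$ is purely imaginary with a definite sign, and the alternation between the pairs $\{\pm1\}$ and $\{\pm e^{\imath p}\}$ falls out of the Taylor expansions of the two denominator factors (Proposition~\ref{prop:pp}). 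Without some computation of this kind your argument proves only that $\mathbf{f}^\diamond$ blows up logarithmically on each side with \emph{some} signs, not the stated alternating Scherk pattern.
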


\begin{remark}
 Some of the results in Theorem \ref{teo1} and Theorem \ref{teo2}—with the exception of the explicit construction of the constants—were previously obtained by Bshouty and Hengartner \cite{BshoutyHengartner1997}; see also Laugesen’s paper  \cite{Laugesen1997} for related results. However, our approach is more direct and fully constructive, and the resulting proofs differ substantially from theirs.

However, the  construction of the Enneper–Weierstrass parameters that explicitly depend on the vertices of a concave quadrilateral is a challenging problem, and its establishment has been essential to prove the second part of Theorem~\ref{teo2}.
\end{remark}

If a harmonic diffeomorphism $f:\mathbb D \to Q$ is given, then we call the unique point $c_0=f(0)\in Q$ \emph{the harmonic center} of $Q$.

Finally, we prove the following theorem 

\begin{theorem}\label{etreta}
Given a Pitot quadrilateral $Q$, assume that $\Sigma=\{(u,v, \mathbf{f}(u,v)): (u,v)\in Q\}$ is any bounded minimal graph above it. Then for $\xi = (c_0, \mathbf{f}(c_0))$ we have $$\left|\mathcal{K}(\xi)\right| <  \left|\mathcal{K}^\diamond(\xi)\right|,$$ provided that the unit normals of $\Sigma$ and $\Sigma^\diamond$ at $\xi$ coincide and the mixed derivatives of $\mathbf{f}$ and $\mathbf{f}^\diamond$ coincide at the point $c_0$, where $c_0$ is the harmonic center of $Q$  and $\mathcal {K}$ and $\mathcal {K}^\diamond$ are the Gaussian curvatures of the surfaces $\Sigma$ and $\Sigma^\diamond$ respectively. The result is sharp.
\end{theorem}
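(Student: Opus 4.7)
The plan is to pass to Weierstrass parameters, identify the Schwarz--Pick inequality for the stereographic Gauss map as the binding analytic constraint, and invoke the boundedness of $\Sigma$ to rule out equality.

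Parameterize $\Sigma$ conformally by $\mathbb{D}$ via an isothermal immersion $X(z)=(f(z),T(z))$ with $X(0)=\xi$. Writing $f=h+\bar g$, $\mathbf{p}=h'$, $\mathbf{q}=\sqrt{g'/h'}:\mathbb{D}\to\mathbb{D}$ and $T=2\,\operatorname{Im}\int_{0}^{z}\mathbf{p}\mathbf{q}\,d\zeta$ (exactly the Enneper--Weierstrass form used for $\Sigma^{\diamond}$ in Theorem~\ref{teo2}, but now for an arbitrary bounded graph), the standard computation gives
\[
|\mathcal{K}(\xi)|=\frac{4\,|\mathbf{q}'(0)|^{2}}{|\mathbf{p}(0)|^{2}(1+|\mathbf{q}(0)|^{2})^{4}}.
\]
Because the stereographic projection of the Gauss map is exactly $\mathbf{q}$ (and the upper-hemisphere normal of a graph corresponds to $|\mathbf{q}|<1$), the hypothesis that the unit normals of $\Sigma$ and $\Sigma^\diamond$ at $\xi$ coincide reads $\mathbf{q}(0)=\mathbf{q}^{\diamond}(0)=:q_{0}$.

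Next I translate the mixed-derivative hypothesis into an analytic constraint on the $2$-jet of $(\mathbf{p},\mathbf{q})$ at $0$. Let $F=f^{-1}$; differentiating $h(F)+\overline{g(F)}=w$ once gives the first-order data $F_{w}(c_{0})=[\mathbf{p}(0)(1-|q_{0}|^{4})]^{-1}$, $F_{\bar w}(c_{0})=-\bar q_{0}^{\,2}F_{w}(c_{0})$, and differentiating once more allows one to solve for $F_{ww}(c_{0})$, $F_{w\bar w}(c_{0})$, $F_{\bar w\bar w}(c_{0})$ in terms of $(\mathbf{p}(0),\mathbf{p}'(0),\mathbf{q}(0),\mathbf{q}'(0))$. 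Since $T_{z}=-i\mathbf{p}\mathbf{q}$ and $\mathbf{f}=T\circ F$, the chain rule then yields a closed formula for $\mathbf{f}_{ww}(c_{0})$, and therefore for $\mathbf{f}_{uv}(c_{0})=2\operatorname{Im}\mathbf{f}_{ww}(c_{0})$, purely in terms of this $2$-jet. Using the rotational freedom $z\mapsto e^{i\alpha}z$ to fix $\arg\mathbf{p}(0)=\arg\mathbf{p}^{\diamond}(0)$, the hypothesis $\mathbf{f}_{uv}(c_{0})=\mathbf{f}^{\diamond}_{uv}(c_{0})$ becomes a single real affine identity for the rotation-invariant ratio $\eta:=\mathbf{q}'(0)/\mathbf{p}(0)$, matching its real part to that of $\eta^{\diamond}$.

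The analytic core is the Schwarz--Pick inequality for the holomorphic self-map $\mathbf{q}:\mathbb{D}\to\mathbb{D}$,
\[
|\mathbf{q}'(0)|\le 1-|q_{0}|^{2},
\]
with equality exactly when $\mathbf{q}$ is a Möbius automorphism of $\mathbb{D}$. For $\Sigma^{\diamond}$ we have $\mathbf{q}^{\diamond}=\phi$ a Möbius automorphism (Theorem~\ref{teo1}), so equality is attained. Substituting into the curvature formula and combining with the constraint on $\eta$ gives $|\mathcal{K}(\xi)|\le|\mathcal{K}^{\diamond}(\xi)|$. Equality would force $\mathbf{q}$ to be a Möbius extremal with the same $2$-jet at $0$ as $\phi$; by the uniqueness of the Enneper--Weierstrass reconstruction this would identify $\Sigma$ with $\Sigma^{\diamond}$. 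But Theorem~\ref{teo2} shows that $\Sigma^{\diamond}$ is unbounded because of its alternating $\pm\infty$ Scherk-type asymptotics along the sides of $Q$, contradicting the hypothesis that $\Sigma$ is a bounded graph. Hence the inequality is strict, $|\mathcal{K}(\xi)|<|\mathcal{K}^{\diamond}(\xi)|$. Sharpness follows by approximating $\Sigma^{\diamond}$ by a sequence of bounded minimal graphs over $Q$ obtained from the Jenkins--Serrin Dirichlet problem with truncated boundary data $\pm n$; their Weierstrass data converge locally uniformly to $(\mathbf{p}^{\diamond},\mathbf{q}^{\diamond})$, hence $|\mathcal{K}|$ at $c_{0}$ tends to $|\mathcal{K}^{\diamond}(\xi)|$.

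The main obstacle is the explicit $2$-jet computation of $\mathbf{f}_{uv}(c_{0})$ in the general case $q_{0}\in\mathbb{D}\setminus\{0\}$: one must carefully eliminate $F_{ww}$, $F_{w\bar w}$, $F_{\bar w\bar w}$ from the twice-differentiated harmonic relations and check that the resulting real constraint on $\eta$ is precisely the one that, in combination with Schwarz--Pick, forces the Möbius-extremal $\Sigma^{\diamond}$ to be the sole potential equality case. The rest of the argument then rests on the uniqueness part of the Enneper--Weierstrass correspondence and on the unboundedness of $\Sigma^{\diamond}$ established in Theorem~\ref{teo2}.
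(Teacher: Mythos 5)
Your proposal takes a genuinely different route from the paper, but it contains a gap at its central step that I do not see how to close.

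The binding issue is the denominator of the curvature formula. You correctly write
\[
|\mathcal{K}(\xi)|=\frac{4\,|\mathbf{q}'(0)|^{2}}{|\mathbf{p}(0)|^{2}\bigl(1+|\mathbf{q}(0)|^{2}\bigr)^{4}},
\]
and Schwarz--Pick does control the numerator: $|\mathbf{q}'(0)|\le 1-|q_{0}|^{2}$ once the normal condition fixes $\mathbf{q}(0)=q_{0}$. But it says nothing about $|\mathbf{p}(0)|=|h'(0)|$, which depends on the entire harmonic parametrization of $\Sigma$ over $Q$ and is not fixed by any of the pointwise hypotheses. Your attempt to recover control via the mixed-derivative condition gives, as you yourself say, \emph{one real affine identity} matching $\Re\eta$ to $\Re\eta^{\diamond}$ for $\eta=\mathbf{q}'(0)/\mathbf{p}(0)$; a single real linear constraint on a complex quantity cannot bound $|\eta|$ from above, so the asserted conclusion $|\mathcal{K}(\xi)|\le|\mathcal{K}^{\diamond}(\xi)|$ simply does not follow from what precedes it. This is not a technical omission: the whole difficulty of Finn--Osserman-type curvature estimates (and of the Gaussian curvature conjecture resolved in \cite{kalaj2025gaussian}) is precisely that pointwise Schwarz--Pick data do not see the domain $Q$, and the sharp constant can only be extracted by a global comparison. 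The rigidity step has the same defect: equality in Schwarz--Pick forces $\mathbf{q}$ to be a M\"obius automorphism, but two minimal graphs can share the Gauss map $\mathbf{q}$ while having entirely different $\mathbf{p}$; ``uniqueness of the Enneper--Weierstrass reconstruction'' does not identify $\Sigma$ with $\Sigma^{\diamond}$ unless you additionally invoke a uniqueness theorem for harmonic maps of $\mathbb{D}$ onto $Q$ with prescribed Blaschke dilatation (in the spirit of \cite{zbMATH05159460}), which you do not supply.

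The paper's proof is structured exactly to avoid this trap. It argues by contradiction: after a translation and a homothety normalizing the curvatures to be equal, the hypotheses on the normal and the mixed derivative, combined with the minimal surface equation, force the full Hessians of the two height functions to agree at $c_{0}$ (Lemma~\ref{lemab}). The difference $\phi=\phi_{1}-\phi_{2}$ then satisfies a linear elliptic equation and vanishes to second order, so by the Finn--Osserman lemma it behaves like $\Re\{\zeta^{N}\}$ with $N\ge 3$, producing at least six alternating-sign sectors at $c_{0}$. The Scherk asymptotics of $\mathbf{f}^{\diamond}$ (blow-up to $\pm\infty$ along the four sides while the bounded graph stays finite) limit the number of components of $\{\phi\neq 0\}$ whose closures meet the sides to four, and the maximum principle then yields a contradiction. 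Your observation that $\Sigma^{\diamond}$ is unbounded and hence cannot coincide with a bounded $\Sigma$ is in the right spirit, but in the paper this global boundary behavior enters as the quantitative engine of the estimate itself, not merely as the tiebreaker for the equality case.
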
 
\begin{remark}
In Theorem~\ref{kater}---a reformulation of Theorem~\ref{etreta}---we derive an explicit formula for the Gaussian curvature of $\Sigma^\circ$ at the point $\xi$, expressed solely in terms of the vertices of the polygon $Q$.
\end{remark}
\section{Preliminaries}
Let $\Omega$ be a polygon with $n$ distinct vertices
$b_1, b_2, \ldots, b_n$ taken in counterclockwise order along its boundary.
Choose a partition
$0 = t_0 < t_1 < \cdots < t_n = 2\pi$
of the interval $[0, 2\pi]$ and define the step function
\[
\varphi(e^{it}) = b_k, \qquad t_{k-1} < t < t_k, \quad k = 1, 2, \ldots, n.
\]
The harmonic extension of $\varphi$ to the unit disk is given by the Poisson integral
\[
f(z) = \frac{1}{2\pi} \int_0^{2\pi}
\frac{1 - |z|^2}{|e^{it} - z|^2}\,\varphi(e^{it})\,dt.
\]
By the {Radó–Kneser–Choquet theorem} (see e.g. \cite{sheil, Duren2004}), if the boundary function
$\varphi$ parametrizes a \emph{convex} Jordan curve, then $f$ is
\emph{univalent} in the unit disk $\mathbb{D}$ and maps $\mathbb{D}$ onto $\Omega$.
As a corollary, by using the limiting procedure, the above formula defines a one-to-one harmonic mapping
$f:\mathbb{D}\to\Omega$ whenever $\Omega$ is convex. If the quadrilateral is not convex, then we do not have any more  such a nice criterion  for univalence of the Poisson extension of the step function.

The mapping $f$ admits a canonical decomposition $f = h + \bar{g}$,
where $h$ and $g$ are analytic in $\mathbb{D}$ and $g(0)=0$.
Writing the Poisson kernel in the complex form,
\[
\frac{1 - |z|^2}{|e^{it} - z|^2}
   = \frac{e^{it}}{e^{it} - z}
     + \frac{\bar{z}}{e^{-it} - \bar{z}},
\]
we obtain
\[
h(z) = \frac{1}{2\pi}\int_0^{2\pi}
        \frac{e^{it}}{e^{it} - z}\,\varphi(e^{it})\,dt,
\qquad
g(z) = \frac{1}{2\pi}\int_0^{2\pi}
        \frac{\bar{z}}{e^{-it} - \bar{z}}\,\varphi(e^{it})\,dt.
\]

Differentiating and substituting the step function $\varphi(e^{it})$
yields
\[
h'(z) = \frac{1}{2\pi \imath}\sum_{k=1}^n
         b_k\!\left(\frac{1}{z - \zeta_k}
                        - \frac{1}{z - \zeta_{k-1}}\right),
\qquad
g'(z) = \frac{1}{2\pi \imath}\sum_{k=1}^n
         \overline{b_k}\!\left(\frac{1}{z - \zeta_k}
                        - \frac{1}{z - \zeta_{k-1}}\right),
\]
where $\zeta_k = e^{it_k}$ and the indices are taken modulo $n$.
Equivalently,
\[
h'(z) = \sum_{k=1}^n \frac{c_k}{z - \zeta_k},
\qquad
g'(z) = -\sum_{k=1}^n \frac{\overline{c_k}}{z - \zeta_k},
\quad
c_k = \frac{1}{2\pi \imath}\,(b_k - b_{k+1}),
\quad b_{n+1} = b_1.
\] For the above facts we refer to Duren's book  \cite{Duren2004} and Sheil-Small's paper \cite{sheil}.

For the special configuration
$t_1 = 0$, $t_2 = p$, $t_3 = \pi$, $t_4 = \pi + p$, and $t_5 = 2\pi$,
these formulas simplify accordingly. We assume that $b_1,b_2,b_3,b_4$ are the vertices of a quadrilateral ordered in clock-wise direction with respect to the boundary.

Then,
\begin{equation}\label{haprim}
h'(z) = \frac{i}{2\pi} \left(
  \frac{b_2-b_3}{e^{i p}-z}
  + \frac{-(1+z) b_1 + (1+z) b_2 - (-1+z)(b_3-b_4)}{-1+z^2}
  + \frac{b_1-b_4}{e^{i p}+z}
\right)
\end{equation}

and \begin{equation}\label{geprim}
g'(z)
= \frac{i}{2\pi}\left(
  \frac{\overline{b_1} - \overline{b_2}}{1 - z}
+ \frac{\overline{b_2} - \overline{b_3}}{e^{i p} - z}
+ \frac{\overline{b_4} - \overline{b_3}}{1 + z}
+ \frac{\overline{b_1} - \overline{b_4}}{e^{i p} + z}
\right).
\end{equation}

Then there are two quadratic polynomials $P_2$ and $Q_2$ so that  $$\frac{g'(z)}{h'(z)}=\frac{P_2(z)}{Q_2(z)},$$  where
\[
P_2(z)=\frac{\imath}{2\pi} \Bigg(
\begin{aligned}& (1 + e^{\imath p})(e^{\imath p} - z)(1 + z)\,\overline{b_1}
- (-1 + e^{\imath p})(1 + z)(e^{\imath p} + z)\,\overline{b_2} \\
& \quad + (-1 + z)\, e^{2 \imath p} (\overline{b_3} - \overline{b_4})
+ (-1 + z)\, z (\overline{b_3} - \overline{b_4}) \\
& \quad + (-1 + z)\, e^{\imath p}(1 + z)(\overline{b_3} + \overline{b_4})
\end{aligned}
\Bigg)
\]

\[
Q_2(z)=\frac{\imath}{2\pi} \Bigg(
\begin{aligned}
& e^{2 \imath p} \Big( (1 + z) b_1 - (1 + z) b_2 + (-1 + z)(b_3 - b_4) \Big) \\
& \quad + z \Big( -(1 + z) b_1 + (1 + z) b_2 + (-1 + z)(b_3 - b_4) \Big) \\
& \quad - e^{\imath p} (-1 + z^2) (b_1 + b_2 - b_3 - b_4)
\end{aligned}
\Bigg).
\]
We need to find \(p\) such that the quadratic polynomials $Q_2$ and $P_2$ are  perfect squares:
As $Q_2'(z')=0$ for
\[
z' =
\frac{(-1 + e^{2 \imath p}) \,(b_1 - b_2 + b_3 - b_4)}
     {2 \Big( b_1- b_2- b_3 + b_4+ e^{\imath p} \left(b_1  + b_2  -  b_3  -  b_4\right) \Big)},
\] we need to find \(p\) such that \(Q_2(z') = 0\).

Such $p$ satisfies the relation  \[
\begin{aligned}
Y:=& 3 b_1^2 + 3 b_2^2 + 3 b_3^2 + 3 b_4^2
+ 2 b_2 (b_3 - 5 b_4) + 2 b_3 b_4
+ 2 b_1 (b_2 - 5 b_3 + b_4) \\
& \quad + 4 (b_1 + b_2 - b_3 - b_4)(b_1 - b_2 - b_3 + b_4) \cos p \\
& \quad + (b_1 - b_2 + b_3 - b_4)^2 \cos 2p=0
\end{aligned}
.\] Let $b_j = u_j + \imath v_j$, then $\mathrm{Im}(Y)=0$ if and only if $$A\cos (2p)+B \cos p+C =0,$$ where

\[
{
\begin{aligned}
A &= 2\,(u_1 - u_2 + u_3 - u_4)(v_1 - v_2 + v_3 - v_4), \\[6pt]
B &= 4\Big[(u_1 + u_2 - u_3 - u_4)(v_1 - v_2 - v_3 + v_4)
+ (u_1 - u_2 - u_3 + u_4)(v_1 + v_2 - v_3 - v_4)\Big], \\[6pt]
C &= 6(u_1v_1+u_2v_2+u_3v_3+u_4v_4)
+2[(u_2v_3+u_3v_2)-5(u_2v_4+u_4v_2)]
+2(u_3v_4+u_4v_3) \\
&\quad +2[(u_1v_2+u_2v_1)-5(u_1v_3+u_3v_1)+(u_1v_4+u_4v_1)].
\end{aligned}
}
\]
 Thus,
\[
\text{Im}(Y)=0 \quad \text{which is equivalent to} \quad A\cos(2p)+B\cos p+C=0.
\]
Then, a  direct computation yields to  $A+B+C=16 (u_1-u_3)(v_1-v_3)$. Thus, \[
A\cos(2p) + B\cos p + C = 0.
\]

Using \(\cos(2p) = 2\cos^2 p - 1\) and letting \(c = \cos p\), we get
\[
A(2c^2 - 1) + Bc + C = 0,
\quad \text{which implies} \quad
2A c^2 + Bc + (C - A) = 0.
\]
We will prove later that we can take $C = -A - B$, i.e. $v_1=v_3=0$.  Then, this becomes
\[
2A c^2 + Bc - (2A + B) = 0,
\quad \text{which implies} \quad
2A(c^2 - 1) + B(c - 1) = 0.
\]
Factoring the left hand side, we get
\[
(c - 1)\,[\,2A(c + 1) + B\,] = 0.
\]
Hence,
\[
{
\cos p = 1
\quad \text{or} \quad
\cos p = -1 - \frac{B}{2A}.
}
\]
Thus,
\begin{equation}\label{ppe}
{
p = 2n\pi
\quad \text{or} \quad
p = \arccos\!\left(-1 - \frac{B}{2A}\right),
\quad n \in \mathbb{Z},
}
\end{equation}
provided that \(-1 - \dfrac{B}{2A} \in [-1,1].\)

Assuming $p\neq 2n\pi$, because otherwise we have a degenerate case, we conclude that $p = \arccos\!\left(-1 - \frac{B}{2A}\right)$.
Then \begin{equation}\label{cop}
\cos p =E,
\end{equation} 
where 
\begin{equation}\label{EEE}
E=\arccos\!\left(-1 - \frac{B}{2A}\right).
\end{equation} 
We will prove later that $E\in[-1,1]$.
Similarly, $P_2'(z_\circ)=0$ for  
\[
z_\circ =
\frac{(-1 + e^{2 \imath p}) \,(\overline{b_1} - \overline{b_2} + \overline{b_3} - \overline{b_4})}
     {2\Big[(\overline{b_1} - \overline{b_2} - \overline{b_3} + \overline{b_4})
     + e^{\imath p}(\overline{b_1} + \overline{b_2} - \overline{b_3} - \overline{b_4})\Big]}.
\]

The same $p$ satisfying \eqref{ppe} will satisfy the relation $P_2(z_\circ)=0$, which is crucial.

Since \begin{equation}\label{mu0}\mu(0)=
\frac{P_2(0)}{Q_2(0)} =
\frac{(e^{\imath p}+1)\,(\overline{b_1}-\overline{b_3}) + (1-e^{\imath p})\,(\overline{b_2}-\overline{b_4})}
     {(e^{\imath p}+1)\,(b_1-b_3) + (1-e^{\imath p})\,(b_2-b_4)},
\end{equation}
we obtain that
\[
\mu(\zeta)=\frac{g'(\zeta)}{h'(\zeta)}=X\,\frac{(\zeta-z_\circ)^2}{\bigl(1-\zeta\,\overline{z_\circ}\bigr)^2},
\]
where  
$$X=
\frac{
4\Big[(e^{\imath p}+1)(\overline{b_1}-\overline{b_3})
      +(1-e^{\imath p})(\overline{b_2}-\overline{b_4})\Big]
     \Big[(e^{\imath p}+1)(\overline{b_1}-\overline{b_3})
      +(e^{\imath p}-1)(\overline{b_2}-\overline{b_4})\Big]^{2}
}
{
\Big(b_1+b_2-b_3-b_4 + (b_1-b_2-b_3+b_4)e^{\imath p}\Big)
(-1+e^{2\imath p})^{2}
(\overline{b_1}-\overline{b_2}+\overline{b_3}-\overline{b_4})^{2}
}
$$

\section{Reduction of the problem to two complex variables $z,w$}

In the accordance with \eqref{eq:side_condition}, we are given four complex numbers $b_1,b_2,b_3,b_4$ satisfying the Pitot condition
\begin{equation}\label{b1234}
|b_1-b_2|+|b_3-b_4|=|b_1-b_4|+|b_2-b_3|.
\end{equation}
Since this relation is invariant under similarities of the complex plane
(translation, rotation, and homothety),
we may place $b_1$ and $b_3$ at the convenient symmetric positions
$-1$ and $1$ on the real axis.

Let
\[
T_1(u)=u-\frac{b_1+b_3}{2}, \qquad
T_2(u)=\frac{2u}{\,b_3-b_1\,}.
\]
Set \(T = T_2 \circ T_1\). Then,
\[
T(b_1) = -1, \qquad
T(b_2) =: z, \qquad
T(b_3) = 1, \qquad
T(b_4) =: w.
\]
Thus the similarity \(T\) maps the four points \((b_1,b_2,b_3,b_4)\) to \((-1,z,1,w)\).

The Pitot condition \eqref{b1234} becomes
\[
|z+1|-|z-1|=|w+1|-|w-1|,
\]
which shows that $z$ and $w$ lie on the same  hyperbola with the foci $\pm 1$.

Thus, we define the new variables:
let
  \[
w = u+\imath v=\frac{2b_4 - b_1 - b_3}{b_3 - b_1}=\sin m\cosh s+\imath\,\cos m\sinh s, \]
\[z =x+\imath y=  \frac{2b_2 - b_1 - b_3}{b_3 - b_1}=\sin m\cosh t+\imath\,\cos m\sinh t.
\]
Then, equality \eqref{b1234} takes the normalized canonical form
\begin{equation}\label{hyperbola}
{|w + 1| - |w-1| = |z + 1| - |z-1|.}
\end{equation}
From \[
z_\circ =
\frac{(-1 + e^{2 \imath p}) \,(\overline{b_1} - \overline{b_2} + \overline{b_3} - \overline{b_4})}
     {2\Big[(\overline{b_1} - \overline{b_2} - \overline{b_3} + \overline{b_4})
     + e^{\imath p}(\overline{b_1} + \overline{b_2} - \overline{b_3} - \overline{b_4})\Big]},
\]
substituting
\[
\overline{b_1}=-1, \qquad
\overline{b_2}=x-\imath y, \qquad
\overline{b_3}=-1, \qquad
\overline{b_4}=u-\imath v,
\]
we obtain
\begin{equation}\label{zcir}
z_\circ
=
\frac{i e^{i p}\sin p\,\bigl(-(x+u) + i(y+v)\bigr)}
     {(u - x - 2 + i(y - v))
     + e^{i p}\bigl(x - u - 2 + i(v - y)\bigr)},
\end{equation}
and

\[
X =
\frac{
4\Big[-2(e^{\imath p}+1) + (1-e^{\imath p})(\bar z-\bar w)\Big]
 \Big[-2(e^{\imath p}+1) + (e^{\imath p}-1)(\bar z-\bar w)\Big]^{2}
}
{
\big(z-w-2 + (w-z-2)e^{\imath p}\big)\,
(-1+e^{2\imath p})^{2}\,
(\bar z+\bar w)^{2}
}.
\]
Then, 
\begin{equation}\label{XXX}
X=-\frac{
4 e^{- i p}\,\csc^{2}(p)\,
\bigl( 2\cos(\tfrac{p}{2}) + (i u + v - i x - y)\,\sin(\tfrac{p}{2}) \bigr)^{2}\,
\bigl( 2\cos(\tfrac{p}{2}) + (-i u - v + i x + y)\,\sin(\tfrac{p}{2}) \bigr)
}{
\bigl(u - i(v + i x + y)\bigr)^{2}\,
\bigl( 2\cos(\tfrac{p}{2}) + (-i u + v + i x - y)\,\sin(\tfrac{p}{2}) \bigr)
}.
\end{equation}

We will use the equation \eqref{hyperbola} and the fact that $z$ and $w$ belong to the same branch of the hyperbola $|\zeta+1|-|\zeta-1|=2\kappa$, $\kappa\in[-1,1]$. Namely, we use the hyperbolic coordinates \begin{equation}\label{eq:h}
h(t)=a\cosh t+\imath\,b\sinh t,\qquad a=\kappa,\quad b=\sqrt{1-\kappa^2}.
\end{equation}

Observe that \eqref{eq:h} is equivalent to the equation  
\[
\frac{x^{2}}{\sin^{2} m}-\frac{y^{2}}{\cos^{2} m}=1,
\]
where $x+i y = h(t)$, and $\kappa=\sin m$. 
\subsection{The geometric meaning of the parameter $t$}
\label{sec:geom_t}

Consider the hyperbola
\[
\frac{x^{2}}{\sin^{2} m}-\frac{y^{2}}{\cos^{2} m}=1,
\]
which admits the standard Lorentzian parametrization
\[
x(t)=\sin m\,\cosh t, \qquad
y(t)=\cos m\,\sinh t .
\]
The parameter $t$ has a natural geometric interpretation in the
two--dimensional Minkowski plane.

\medskip
\noindent\textbf{1. Hyperbolic angle (rapidity).}
The parameter $t$ is the \emph{hyperbolic angle} of the point
$(x(t),y(t))$ with respect to the $x$--axis.
At $t=0$ the point is at the vertex $(\sin m,0)$. By increasing $t$, one
moves the point along the right branch of the hyperbola.

\medskip
\noindent\textbf{2. Lorentzian arc--length.}
The Minkowski metric is
\[
ds^{2}=dx^{2}-dy^{2}.
\]
Differentiating the parametrization, gives
\[
dx = \sin m\,\sinh t\,dt, \qquad
dy = \cos m\,\cosh t\,dt,
\]
and hence
\[
ds^{2} = dx^{2}-dy^{2}
        = -dt^{2}.
\]
Thus,
\[
|ds| = dt,
\]
so the parameter $t$ measures the arc--length of the hyperbola with
respect to the Lorentzian metric.  Equivalently, $t$ is the intrinsic
Minkowski distance of the point $z=x+i y$ from the vertex $(\pm \sin m, 0)$ along the hyperbola.

\begin{lemma}\label{lepo}
For every real $x,u,y$, and $v$, such that $z=u+iv$ and $w=x+\imath y$ satisfy $|z+1|-|z-1|=|w+1|-|w-1|$, define $E$ by \eqref{EEE}. Then,
\[
E = \frac{u v-3 v x-3 u y+x y}{(u+x) (v+y)},
\]

and $|E|<1$.
\end{lemma}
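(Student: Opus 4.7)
The plan is to split the proof into two separate tasks: (i) verify the explicit rational expression for $E$ by direct substitution into the definitions of $A$ and $B$, and (ii) prove the bound $|E|<1$ by passing to the hyperbolic coordinates provided by the constraint $|z+1|-|z-1|=|w+1|-|w-1|$.

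For part (i), I would substitute the normalized vertices $b_1=-1$, $b_3=1$, $b_2=x+\imath y$, $b_4=u+\imath v$ into the formulas for $A$ and $B$ given just before \eqref{ppe}. With $u_1=-1,\,u_2=x,\,u_3=1,\,u_4=u$ and $v_1=0,\,v_2=y,\,v_3=0,\,v_4=v$, the three factors in $A$ collapse immediately to $A = 2(u+x)(v+y)$, and a short cancellation in $B$ (the two terms in the bracket share the factor $v-y$) gives $B = 8(v-y)(x-u)$. Then
\[
E = -1-\frac{B}{2A} = -\frac{(u+x)(v+y)+2(v-y)(x-u)}{(u+x)(v+y)},
\]
and expanding the numerator yields exactly $uv-3vx-3uy+xy$ after cancellation of the $\pm 2uv,\pm 2xy$ terms. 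This is pure arithmetic and should not present any obstacle.

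For part (ii), the key is the parametrization already set up in the paper. Writing $x=\sin m\cosh t$, $y=\cos m\sinh t$, $u=\sin m\cosh s$, $v=\cos m\sinh s$ with the same $m$ (this is precisely what \eqref{hyperbola} encodes via Subsection~\ref{sec:geom_t}), I would compute the numerator and denominator of $E$ separately. The numerator factors as
\[
uv+xy-3(vx+uy)=\sin m\cos m\bigl[\tfrac12(\sinh 2s+\sinh 2t)-3\sinh(s+t)\bigr]=\sin m\cos m\,\sinh(s+t)\bigl[\cosh(s-t)-3\bigr],
\]
using $\sinh 2s+\sinh 2t=2\sinh(s+t)\cosh(s-t)$ and $\sinh s\cosh t+\cosh s\sinh t=\sinh(s+t)$. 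For the denominator, the standard sum-to-product identities for $\cosh s+\cosh t$ and $\sinh s+\sinh t$ produce a common factor $\cosh^2\!\tfrac{s-t}{2}=\tfrac12(1+\cosh(s-t))$, so
\[
(u+x)(v+y)=\sin m\cos m\,\sinh(s+t)\bigl[1+\cosh(s-t)\bigr].
\]
The common factor cancels and one obtains the clean expression
\[
E=\frac{\cosh(s-t)-3}{\cosh(s-t)+1}=1-\frac{4}{\cosh(s-t)+1}.
\]

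The bound $|E|<1$ is then immediate: setting $\tau:=\cosh(s-t)\ge 1$, the function $1-4/(\tau+1)$ is strictly increasing in $\tau$, equals $-1$ at $\tau=1$, and tends to $1$ as $\tau\to\infty$. Hence $E\in[-1,1)$, and the left endpoint $E=-1$ occurs only when $s=t$, which forces $z=w$ and therefore $b_2=b_4$. Excluding this degeneracy (as the paper does for Pitot quadrilaterals) yields $|E|<1$ strictly. The main (minor) obstacle is the algebraic bookkeeping in step (i) and recognizing the correct hyperbolic sum-to-product identities in step (ii); once those are applied the bound is transparent from the one-variable reduction.
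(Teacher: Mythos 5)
Your proof is correct and follows essentially the same route as the paper: pass to the hyperbolic parametrization forced by the Pitot constraint and reduce $E$ to the one-variable expression $E=1-\tfrac{4}{1+\cosh(s-t)}$, from which the bound is immediate. You actually supply more detail than the paper's proof --- the explicit verification of the rational formula from $A$ and $B$, and the sum-to-product identities behind \eqref{eq:Eraw} --- and you correctly identify the degenerate value as $E=-1$ at $s=t$ (the paper's proof misstates this as $E(0)=1$).
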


\begin{proof}
We use the parametrization
\begin{equation}\label{eq:h}
h(\tau)=\sin m \cosh \tau+\imath\,\cos m\sinh \tau ,\qquad m\in[0,\pi],
\end{equation}
and set $z=h(s)=u+iv$ and $w=h(t)=x+iy$. From \eqref{eq:h}, we get
\begin{equation}\label{eq:uxvy}
u=\sin m\cosh s,\quad v=\cos m\sinh s,\qquad
x=\sin m\cosh t,\quad y=\cos m\sinh t.
\end{equation}
Then,
\begin{equation}\label{eq:Eraw}
E=E(s-t)=1-\frac{4}{1+\cosh (s-t)}.
\end{equation}

Thus: $-1\le E \le 1$ for all $s\ne t$; $E(0)=1$ in the degenerate case $s=t$. \end{proof}

\begin{lemma}
For $X$ defined in \eqref{XXX}, we have $|X|=1$.
\end{lemma}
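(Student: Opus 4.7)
The plan is to reduce the verification of \(|X|=1\) to an algebraic identity in the hyperbolic parametrization already used in Section~3 and Lemma~\ref{lepo}. Writing \(z = \sin m\cosh s + i\cos m\sinh s\) and \(w = \sin m\cosh t + i\cos m\sinh t\), I introduce the ``sum'' and ``shift'' half-parameters \(\sigma = (s+t)/2\) and \(\delta = (s-t)/2\). The identity \(\cos p = 1 - 4/(1+\cosh(s-t))\) from Lemma~\ref{lepo}, together with \(1+\cosh(2\delta) = 2\cosh^2\delta\), gives
\[
\sin(p/2) = \operatorname{sech}\delta,\qquad \cos(p/2) = \tanh\delta,\qquad \sin p = 2\tanh\delta/\cosh\delta.
\]
(Since \(p\in(0,\pi)\), I may assume \(\delta>0\) after flipping the sign of \(s-t\) if necessary.) Sum-to-product formulas applied to \(\cosh s\pm\cosh t\) and \(\sinh s\pm\sinh t\) then express \(u\pm x\) and \(v\pm y\) entirely in terms of \(m,\sigma,\delta\).

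With these substitutions the three linear factors appearing in \eqref{XXX}, namely
\[
A = 2\cos(p/2) + (iu+v-ix-y)\sin(p/2),\qquad A'' = 2\cos(p/2) + (-iu-v+ix+y)\sin(p/2),
\]
\[
A''' = 2\cos(p/2) + (-iu+v+ix-y)\sin(p/2),
\]
become transparent. A direct inspection yields \(A''' = \overline{A}\), so this denominator factor cancels exactly one copy of \(|A|\) in the numerator. Using the auxiliary identity \(\cos^2 m\,\cosh^2\sigma + \sin^2 m\,\sinh^2\sigma = \sinh^2\sigma + \cos^2 m\), the squared moduli of \(A\) and \(A''\) telescope to perfect squares
\[
|A|^2 = 4\tanh^2\delta\,(\cosh\sigma + \cos m)^2,\qquad |A''|^2 = 4\tanh^2\delta\,(\cosh\sigma - \cos m)^2,
\]
and the same identity gives \(|(u+x) - i(v+y)|^2 = 4\cosh^2\delta\,(\sinh^2\sigma + \sin^2 m)\).

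Collecting everything and using \(\sin^2 p = 4\tanh^2\delta/\cosh^2\delta\), the formula \eqref{XXX} simplifies to
\[
|X| \;=\; \frac{4\,|A|\,|A''|}{\sin^2 p\,\bigl((u+x)^2+(v+y)^2\bigr)} \;=\; \frac{\cosh^2\sigma - \cos^2 m}{\sinh^2\sigma + \sin^2 m} \;=\; 1,
\]
since \(\cosh^2\sigma - \cos^2 m = 1+\sinh^2\sigma - \cos^2 m = \sinh^2\sigma + \sin^2 m\), which is positive. The entire argument is essentially bookkeeping: the main obstacle is keeping the six substitutions in order and verifying that each of the three factors of the form \(2\cos(p/2) + c\sin(p/2)\) has modulus equal to a clean product \(2\tanh\delta \cdot (\cosh\sigma\pm\cos m)\). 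No separate analytic estimate is needed beyond the half-angle identity and the algebraic collapse just described.
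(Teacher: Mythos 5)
Your proof is correct. It takes the same starting point as the paper (pass to the hyperbolic coordinates \(u=\sin m\cosh s\), \(v=\cos m\sinh s\), \(x=\sin m\cosh t\), \(y=\cos m\sinh t\)), but the subsequent route is genuinely different. The paper first simplifies \(X\) (by an unshown ``straightforward calculation'') into the closed product form
\[
X=\frac{\left(\imath e^{s/2}+e^{t/2}\right)^2\left(1+e^{\imath m+\frac{s+t}{2}}\right)^2}{\left(e^{s/2}+\imath e^{t/2}\right)^2\left(e^{\imath m}+e^{\frac{s+t}{2}}\right)^2},
\]
and then observes that each ratio is manifestly unimodular, since \(|\imath a+b|=|a+\imath b|\) and \(|1+ce^{\imath m}|=|e^{\imath m}+c|\) for real \(a,b,c\). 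You instead stay with the raw form \eqref{XXX}, convert \(p\) into \(\delta=(s-t)/2\) via the half-angle identities \(\sin(p/2)=\operatorname{sech}\delta\), \(\cos(p/2)=|\tanh\delta|\) (which do follow from the formula \(E=1-4/(1+\cosh(s-t))\) of Lemma~\ref{lepo}), note that the denominator factor equals \(\overline{A}\), and compute the moduli of the remaining factors explicitly; I checked that \(|A|^2=4\tanh^2\delta(\cosh\sigma+\cos m)^2\), \(|A''|^2=4\tanh^2\delta(\cosh\sigma-\cos m)^2\), and \((u+x)^2+(v+y)^2=4\cosh^2\delta(\sinh^2\sigma+\sin^2 m)\) all hold, so the final cancellation \(\cosh^2\sigma-\cos^2 m=\sinh^2\sigma+\sin^2 m\) closes the argument. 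Your version is fully checkable line by line, whereas the paper's version has the advantage of producing the explicit unimodular expression for \(X\), which reappears as the scaling factor \(Z\) in the proof of Theorem~\ref{teo2}. Two cosmetic points: for \(\delta<0\) one has \(\cos(p/2)=-\tanh\delta\), which merely swaps the roles of the factors \(\cosh\sigma\pm\cos m\) and leaves the product \(|A|\,|A''|\) unchanged; and the identity needed for \((u+x)^2+(v+y)^2\) is the companion identity \(\sin^2 m\cosh^2\sigma+\cos^2 m\sinh^2\sigma=\sin^2 m+\sinh^2\sigma\), not literally the one you quoted. Neither affects the conclusion.
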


\begin{proof}
After a straightforward calculation, by using \eqref{eq:uxvy}, we obtain  $$X=\frac{\left(\imath e^{s/2}+e^{t/2}\right)^2 \left(1+e^{\imath m+\frac{s+t}{2}}\right)^2}{\left(e^{s/2}+\imath e^{t/2}\right)^2 \left(e^{\imath m}+e^{\frac{s+t}{2}}\right)^2}.$$
Let  $s,t,m \in \mathbb{R}.$
 Set  $a = e^{s/2} > 0$,\; $b = e^{t/2} > 0,\;$ $c = e^{(s+t)/2} > 0$.

Then
\[
X = \frac{(\imath a + b)^2 (1 + c e^{\imath m})^2}{(a + \imath b)^2 (e^{\imath m} + c)^2}.
\]

We compute the moduli term by term and see that each fraction has modulus $1$, thus:
\[
|X|
= \left| \frac{\imath a + b}{a + \imath b} \right|^2
  \left| \frac{1 + c e^{\imath m}}{e^{\imath m} + c} \right|^2
= 1.
\]
\end{proof}

\begin{lemma}\label{lem:z0}
Let
\[
z_\circ =
\frac{i e^{i p}\sin p\,\bigl(-(x+u) + i(y+v)\bigr)}
     {(u - x - 2 + i(y - v))
     + e^{i p}\bigl(x - u - 2 + i(v - y)\bigr)},
\]
where
\[
p = \arccos\!\frac{u v-3 v x-3 u y+x y}{(u+x) (v+y)},
\qquad
v = \frac{\sqrt{(1-\kappa^2)\bigl((1+2u)^2 - \kappa^2\bigr)}}{2\kappa},
\]
and
\[
\kappa = \sqrt{(1+x)^2 + y^2} - \sqrt{(x-1)^2 + y^2}, \qquad \kappa \in (-1,1).
\]
Then \( |z_\circ| < 1 \).
\end{lemma}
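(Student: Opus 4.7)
The plan is to pass to the hyperbolic coordinates $(s,t,m)$ of Lemma~\ref{lepo} and cast $z_\circ$ in a closed hyperbolic form, from which $|z_\circ|<1$ becomes immediate via a standard identity.

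Using the identity $\sin m + i\cos m = ie^{-im}$ we write the vertices as $u+iv = i\sinh(s-im)$ and $x+iy = i\sinh(t-im)$. Setting $\alpha=(s+t)/2$ and $\beta=(s-t)/2$, the sum-to-product formulas give
\[
-(x+u)+i(y+v) = 2i\cosh\beta\,\sinh(\alpha+im),\qquad
u-x+i(y-v) = -2i\sinh\beta\,\cosh(\alpha+im).
\]
From Lemma~\ref{lepo}, $\cos p = 1 - 2/\cosh^2\beta$, whence $\sin(p/2)=1/\cosh\beta$ and $\cos(p/2)=|\sinh\beta|/\cosh\beta$, while $1\pm e^{ip}$ factor through $e^{ip/2}$ in the standard way.

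Substituting into \eqref{zcir}, and using that the counterclockwise labeling of $Q$ (with $b_1=-1$ and $b_3=1$) places $b_2$ and $b_4$ on opposite sides of the real axis and forces $s>t$, i.e.\ $\beta>0$ (so $|\sinh\beta|=\sinh\beta$), all factors of $\cosh\beta$ and $\sinh\beta$ cancel, and the half-angle identity $\sinh w/(1+\cosh w)=\tanh(w/2)$ produces the compact formula
\[
z_\circ \;=\; e^{ip/2}\,\tanh\!\left(\tfrac{\alpha+im}{2}\right).
\]

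Finally, the identity $|\cosh w|^2 - |\sinh w|^2 = \cos(2\,\Im w)$, applied with $w=(\alpha+im)/2$, yields
\[
1 - |z_\circ|^2 \;=\; \frac{\cos m}{|\cosh((\alpha+im)/2)|^2}\;>\;0,
\]
because $\cos m>0$ by the branch choice in \eqref{eq:h}, and the denominator is strictly positive at every nondegenerate configuration. The main technical obstacle is the algebraic collapse to $e^{ip/2}\tanh((\alpha+im)/2)$: it relies on the sign relation $|\sinh\beta|/\sinh\beta=+1$, which in turn requires verifying that the orientation conventions of the paper force $\beta>0$—with the wrong sign one would instead obtain $z_\circ = -e^{ip/2}\coth((\alpha+im)/2)$, whose modulus exceeds $1$.
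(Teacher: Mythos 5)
Your route is essentially the paper's: pass to the hyperbolic coordinates $(s,t,m)$, collapse $z_\circ$ to a closed form, and read off the modulus. Your expression $z_\circ=e^{ip/2}\tanh\!\bigl(\tfrac{\alpha+im}{2}\bigr)$ is exactly the paper's displayed formula, since its first factor is $(e^{s/2}+ie^{t/2})/(e^{s/2}-ie^{t/2})=(e^{\beta}+i)/(e^{\beta}-i)=e^{ip/2}$ when $\beta>0$, and its second factor is $(e^{\alpha+im}-1)/(e^{\alpha+im}+1)=\tanh\!\bigl(\tfrac{\alpha+im}{2}\bigr)$. Your sum-to-product identities and the final step $1-|z_\circ|^2=\cos m/\bigl|\cosh\!\bigl(\tfrac{\alpha+im}{2}\bigr)\bigr|^2$ all check out (indeed your version gives $|z_\circ|^2=\dfrac{\cosh\alpha-\cos m}{\cosh\alpha+\cos m}$, which corrects a small slip in the paper, where this quantity is written as $|z_\circ|$ rather than $|z_\circ|^2$). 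You do restrict to $\cos m>0$; the case $m\in[\pi/2,\pi]$ needs the companion formula, as in the paper, but the paper itself reduces to $m\in[0,\pi/2]$ without loss of generality.

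The genuine gap is the step you flag yourself: the sign of $\sinh\beta$. The concern is real, not cosmetic. For $s<t$ the same substitution yields $z_\circ=-e^{ip/2}\coth\!\bigl(\tfrac{\alpha+im}{2}\bigr)$, and a direct numerical check in the defining formula (e.g.\ $m=\pi/4$, $s=0$, $t=1$) gives $|z_\circ|\approx 2.09>1$; so the conclusion genuinely requires $s>t$, a hypothesis the paper's proof uses silently when it writes down its closed form. But your justification for $\beta>0$ is false: the counterclockwise labeling does \emph{not} force $b_2$ and $b_4$ onto opposite sides of the real axis --- in the paper's own concave example ($m=0.3$, $t=0.3$, $s=1$) both $z=h(t)$ and $w=h(s)$ lie in the upper half-plane. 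The correct argument is orientation itself: the signed area of $Q(-1,z,1,w)$ equals
\[
\tfrac12\,\Im\bigl(\overline{b_1}b_2+\overline{b_2}b_3+\overline{b_3}b_4+\overline{b_4}b_1\bigr)=v-y=\cos m\,(\sinh s-\sinh t),
\]
so for $\cos m>0$ the counterclockwise ordering of the vertices is equivalent to $s>t$, i.e.\ $\beta>0$. Insert that computation in place of the ``opposite sides'' claim and your proof is complete.
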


\begin{proof}
Let \( 2\kappa = |z+1| - |z-1| = |w+1| - |w-1| \).
Use the parametrization
\[
h(\tau) =  a \cosh\tau + \imath\,{b}\sinh\tau,
\qquad a = \kappa, \quad b = \sqrt{1 - \kappa^2}.
\]
Set \( z = h(s) = u + \imath v \) and \( w = h(t) = x + \imath y \).

Introduce the auxiliary parameters
\[
a = \sin m, \qquad \rho = e^{\,s - t} > 0,
\]
and define
\[
p = \arccos\!\left( \frac{\rho^2 - 6\rho + 1}{(\rho + 1)^2} \right).
\]
Then,
\[
u =a \cosh s, \quad
v = b \sinh s, \quad
x = {a}\cosh t, \quad
y = {b}\sinh t.
\]

Substituting the last formulas into the definition of \(z_\circ\), gives for $m\in[0,\pi/2]$ that 
\[
z_\circ =
\frac{\left(e^{s/2}+i e^{t/2}\right) \left(-1+e^{\frac{1}{2} (2 i m+s+t)}\right)}{\left(e^{s/2}-i e^{t/2}\right) \left(1+e^{\frac{1}{2} (2 i m+s+t)}\right)}.
\]

For $m\in[\pi/2,\pi]$, we have 
$$z_\circ=\frac{\left(e^{s/2}+i e^{t/2}\right) \left(e^{i m}+e^{\frac{s+t}{2}}\right)}{\left(e^{s/2}-i e^{t/2}\right) \left(-e^{i m}+e^{\frac{s+t}{2}}\right)}.$$
Then, we obtain that 
\[
|z_\circ|
=
\frac{
\cosh\!\left(\tfrac{s+t}{2}\right) - \lvert \cos m \rvert
}{
\cosh\!\left(\tfrac{s+t}{2}\right) + \lvert \cos m \rvert
},
\] 
thus  \( |z_\circ| < 1 \).
\end{proof}

\section{Proofs of Theorem~\ref{teo1} and Theorem~\ref{teo2}}

We already proved that $\mu_f(z) =\frac{g'(z)}{h'(z)}= ({\bf q}(z))^2,$ where ${\bf q}:\mathbb{D}\to \mathbb{D}$ is a M\"obius transformation of the unit disk, and in particular, $h'$ does not vanish in $\mathbb{D}$. Then, by the Sheil-Small theorem \cite{sheil}, the mapping $f$ is a diffeomorphism.
We know that 
$$\Sigma= \{(\Re f, \Im f, 2\Im \left(\int^z_0 {\bf p}(z) {\bf q}(z)dz\right) ): z\in \mathbb{D}\},$$ 
and we prove the Scherk-type property of the minimal graph $\Sigma$. 

We begin by defining the derivatives
\begin{equation}\label{AB}
g'(z)=A\,\frac{(z-z_\circ)^2}{(1-z^2)(e^{2\imath p}-z^2)},
\qquad
h'(z)=B\,\frac{(1-z\bar z_\circ)^2}{(1-z^2)(e^{2\imath p}-z^2)},
\end{equation}
where \(A\neq0\) and \(p\in\mathbb{R}\).
Multiplication of these two functions gives
\[
g'(z)h'(z)
=AB\,\frac{(z-z_\circ)^2(1-z\bar z_\circ)^2}{(1-z^2)^2(e^{2\imath p}-z^2)^2}.
\]

By \eqref{mu0}, and the relation $e^{\imath p}=\frac{(\imath+e^{j})^{2}}{(-\imath+e^{j})^{2}}$, with $j=(s-t)/2$, in view of  \eqref{eq:uxvy} and \eqref{AB}  we have
\[\begin{split}
\frac{g'(0)}{h'(0)}&=\frac{A z_\circ^2}{B}=\frac{(e^{\imath p}+1)\,(\overline{b_1}-\overline{b_3}) + (1-e^{\imath p})\,(\overline{b_2}-\overline{b_4})}
     {(e^{\imath p}+1)\,(b_1-b_3) + (1-e^{\imath p})\,(b_2-b_4)}
     \\&= \frac{-1-e^{\imath p}+\left(-1+e^{\imath p}\right) (u-\imath v-x+\imath y)}{-1-e^{\imath p}+\left(-1+e^{\imath p}\right) (u+\imath (v+\imath x-y))}
      \\&= -\left(\frac{-1+e^{\frac{1}{2} (2 \imath m+s+t)}}{e^{\imath m}+e^{\frac{s+t}{2}}}\right)^{\pm 2},\end{split}\] with $+2$ if $\cos m\ge 0$ and $-2$ if $\cos m<0$. From now on, we assume, without loss of generality,  that $m\in[0,\pi/2]$.
Next, we introduce a scaling factor \(Z\) that relates the parameters \(A\) and \(B\) as follows:
\[
Z=\frac{\left(\imath e^{s/2}+e^{t/2}\right)^2 \left(1+e^{\imath m+\frac{s+t}{2}}\right)^2}
{\left(e^{s/2}+\imath e^{t/2}\right)^2 \left(e^{\imath m}+e^{\frac{s+t}{2}}\right)^2},
\qquad
A=BZ.
\]
To determine \(B\), we evaluate \(h'(z)\) at \(z=0\).
A straightforward computation gives from \eqref{haprim} that
\[
h'(0)
=\frac{-2\imath+\dfrac{-2-\sin(m+\imath s)+\sin(m+\imath t)}{\imath+\sinh\!\left(\tfrac{s-t}{2}\right)}}{\pi}
=Be^{-2\imath p},
\]
and therefore
\[
B=e^{2\imath p}\,
\frac{-2\imath+\dfrac{-2-\sin(m+\imath s)+\sin(m+\imath t)}{\imath+\sinh\!\left(\tfrac{s-t}{2}\right)}}{\pi}.
\]

Using these expressions for \(A\) and \(B\), we can now write
\[
K(z)=\sqrt{g'(z)h'(z)}
=BZ^{1/2}\,\frac{(z-z_\circ)(1-z\bar z_\circ)}{(1-z^2)(e^{2\imath p}-z^2)}
=C\,\frac{(z-z_\circ)(1-z\bar z_\circ)}{(1-z^2)(e^{2\imath p}-z^2)},
\]
where the constant \(C = B Z^{1/2}\).


The constant \(C\) can be written in a compact form
\[
C=\frac{2\imath(1-e^{2j})(\cosh k+\cos m)}{\pi(1+\imath e^{j})^{2}},
\qquad
j=\tfrac{s-t}{2},\quad k=\tfrac{s+t}{2}.
\]

Finally, the expression for \(K(z)\) can be summarized as
\begin{equation}\label{KKK}
K(z)
=C\,\frac{(z-z_\circ)(1-z\,\overline{z_\circ})}
{(1-z^{2})(e^{2\imath p}-z^{2})},
\qquad
e^{\imath p}=\frac{(\imath+e^{j})^{2}}{(\imath-e^{j})^{2}}.
\end{equation}
Moreover, for all real \(s,t,m\), one has the identity
\begin{equation}\label{KeyI}
\frac{C}{e^{2\imath p}-1}
=-\frac{\imath\,\cosh\!\big(\tfrac{s-t}{2}\big)
\big(\cos m+\cosh\!\big(\tfrac{s+t}{2}\big)\big)}{4\pi}.
\end{equation}

\paragraph{The principal part notation.}
For a meromorphic \(h\) at \(z_0\), we write \(\mathcal{P}_{z=z_0} h\) for the principal part (the \( (z-z_0)^{-1}\)-term).

\subsection*{The expansion near \(z=1\)}
Define the principal part
\begin{equation}\label{eq:k-def}
H(z):= \mathcal{P}_{z=1} K(z).
\end{equation}
A direct Taylor expansion of the denominator in \eqref{KKK} gives
\(1-z^2= -2(z-1)+O\bigl((z-1)^2\bigr)\) and \(e^{2\imath p}-z^2=(e^{2\imath p}-1)+O(z-1)\).
Evaluating the numerator at \(z=1\) yields \((1-z_\circ)(1-\overline{z_\circ})=|1-z_\circ|^2\).
Hence
\begin{equation}\label{eq:pp-at-1}
\mathcal{P}_{z=1} K(z)=
-\frac{C}{2\,(e^{2\imath p}-1)}\,\frac{|1-z_\circ|^2}{z-1}.
\end{equation}
Using \eqref{KeyI} we rewrite this in two equivalent (and useful) ways:
\begin{align}
\mathcal{P}_{z=1} K(z)
&=\;\imath\,\frac{\cosh\!\bigl(\tfrac{s-t}{2}\bigr)\,\bigl(\cos m+\cosh\!\bigl(\tfrac{s+t}{2}\bigr)\bigr)}{4\pi}\;
\frac{|1-z_\circ|^2}{z-1}, \label{eq:pp-at-1-formA}\\[4pt]
&=\;\frac{\imath\,\cosh\!\bigl(\tfrac{s-t}{2}\bigr)\,\bigl(\cos m+\cosh\!\bigl(\tfrac{s+t}{2}\bigr)\bigr)}{4\pi\,\sin p}\;
\frac{|1-z_\circ|^{2}}{z-1}.
\label{eq:pp-at-1-formB}
\end{align}

\subsection*{The principal parts at all four poles}
\begin{proposition}[The principal parts of \(K\)]\label{prop:pp}
Let \(K\) be given by \eqref{KKK}. Then, as \(z\) tends to each pole,
\begin{align}
z\to 1:\quad
K(z) &= \phantom{-}\frac{\imath\,\cosh\!\bigl(\tfrac{s-t}{2}\bigr)\,\bigl(\cos m+\cosh\!\bigl(\tfrac{s+t}{2}\bigr)\bigr)}{4\pi\,\sin p}\;
\frac{|1-z_\circ|^{2}}{z-1}
+O(1), \label{eq:p1}\\[6pt]
z\to e^{\imath p}:\quad
K(z) &=-\frac{\imath\,\cosh\!\bigl(\tfrac{s-t}{2}\bigr)\,\bigl(\cos m+\cosh\!\bigl(\tfrac{s+t}{2}\bigr)\bigr)}{4\pi\,\sin p}\;
\frac{|1-z_\circ e^{-\imath p}|^{2}}{z-e^{\imath p}}
+O(1), \label{eq:p2}\\[6pt]
z\to -1:\quad
K(z) &= \phantom{-}\frac{\imath\,\cosh\!\bigl(\tfrac{s-t}{2}\bigr)\,\bigl(\cos m+\cosh\!\bigl(\tfrac{s+t}{2}\bigr)\bigr)}{4\pi\,\sin p}\;
\frac{|1+z_\circ|^{2}}{z+1}
+O(1), \label{eq:p3}\\[6pt]
z\to -e^{\imath p}:\quad
K(z) &=-\frac{\imath\,\cosh\!\bigl(\tfrac{s-t}{2}\bigr)\,\bigl(\cos m+\cosh\!\bigl(\tfrac{s+t}{2}\bigr)\bigr)}{4\pi\,\sin p}\;
\frac{|1+z_\circ e^{-\imath p}|^{2}}{z+e^{\imath p}}
+O(1). \label{eq:p4}
\end{align}
\end{proposition}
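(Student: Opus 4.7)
The plan is to compute the residue at each of the four simple poles $\pm 1$, $\pm e^{\imath p}$ directly, via $\mathrm{Res}_{z=z_0}K(z)=C\,N(z_0)/D'(z_0)$ where $N(z)=(z-z_\circ)(1-z\bar z_\circ)$ and $D(z)=(1-z^{2})(e^{2\imath p}-z^{2})$, and then to repackage each residue using the identity \eqref{KeyI}. The case $z=1$ is already carried out in the derivation of \eqref{eq:pp-at-1-formA}--\eqref{eq:pp-at-1-formB}; the remaining three poles are handled by exactly the same mechanism.

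First, I would evaluate the numerator at each pole. At $\pm 1$ the values are real: $N(1)=|1-z_\circ|^{2}$ and $N(-1)=-|1+z_\circ|^{2}$. At $\pm e^{\imath p}$, I would factor out $e^{\imath p}$ and use the conjugation identity $\overline{1-z_\circ e^{-\imath p}}=1-\bar z_\circ e^{\imath p}$ to obtain
$$N(\pm e^{\imath p})=\pm e^{\imath p}\,|1\mp z_\circ e^{-\imath p}|^{2}.$$
Thus at each pole the numerator collapses to a positive real modulus-squared, multiplied by a sign and, at $\pm e^{\imath p}$, a unimodular factor $e^{\imath p}$.

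Second, I would differentiate $D$ by the product rule and evaluate: $D'(\pm 1)=\mp 2(e^{2\imath p}-1)$ and $D'(\pm e^{\imath p})=\pm 2e^{\imath p}(e^{2\imath p}-1)$. Forming the quotient $CN(z_0)/D'(z_0)$, the factor $e^{\imath p}$ in $N(\pm e^{\imath p})$ cancels exactly against the one in $D'(\pm e^{\imath p})$, and each residue acquires the common scalar $\pm C/\bigl(2(e^{2\imath p}-1)\bigr)$ times a real modulus-squared. This produces the alternating sign pattern claimed in the proposition: the residues at $\pm 1$ share one sign and those at $\pm e^{\imath p}$ share the opposite sign.

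Third, I would invoke the identity \eqref{KeyI} together with $e^{2\imath p}-1=2\imath e^{\imath p}\sin p$ to replace $C/(e^{2\imath p}-1)$ by the explicit trigonometric/hyperbolic expression featuring $\cosh(\tfrac{s-t}{2})$ and $\cos m+\cosh(\tfrac{s+t}{2})$, and then read off each principal part as $\mathrm{Res}/(z-z_0)+O(1)$. The only real obstacle I anticipate is careful bookkeeping: tracking the four $\pm$ signs and the $e^{\imath p}$ phases so that the four residues assemble into the symmetric form displayed in the statement, exactly as in the already-verified case $z=1$. No new analytic input beyond residue calculus and the single identity \eqref{KeyI} is required.
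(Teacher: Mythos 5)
Your proposal is correct and follows essentially the same route as the paper: the paper computes each principal part by a local Taylor expansion of the denominator factors near the pole (worked out explicitly at $z=e^{\imath p}$), which is exactly the simple-pole residue formula $C\,N(z_0)/D'(z_0)$ you use, and it then invokes \eqref{KeyI} in the same way. Your evaluations of $N$ and $D'$ at the four poles, the cancellation of the $e^{\imath p}$ phases, and the resulting sign pattern all agree with the paper's computation, so this is the same proof in slightly more systematic packaging.
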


\begin{proof}
We show the computation at \(z=e^{\imath p}\); the other cases are analogous.

Write \(z=e^{\imath p}+w\). Then
\[
e^{2\imath p}-z^2 = e^{2\imath p}-(e^{\imath p}+w)^2
= -2 e^{\imath p}\,w + O(w^2),\qquad
1-z^2 = 1-e^{2\imath p} + O(w).
\]
Thus
\begin{equation*}
\frac{1}{(1-z^2)(e^{2\imath p}-z^2)}
= \frac{1}{(1-e^{2\imath p})}\,\frac{1}{-2 e^{\imath p}\,w} + O(1)
= \frac{1}{2 e^{\imath p}(e^{2\imath p}-1)}\,\frac{1}{w} + O(1).
\end{equation*}
For the numerator, evaluate at \(z=e^{\imath p}\):
\begin{equation*}
(z-z_\circ)\bigl(1-z\,\overline{z_\circ}\bigr)\Big|_{z=e^{\imath p}}
=(e^{\imath p}-z_\circ)\bigl(1-e^{\imath p}\overline{z_\circ}\bigr)
= e^{\imath p}\,|1-z_\circ e^{-\imath p}|^2.
\end{equation*}
Therefore
\begin{equation*}
K(z)= C\cdot \frac{e^{\imath p}\,|1-z_\circ e^{-\imath p}|^2}{2 e^{\imath p}(e^{2\imath p}-1)}
\cdot \frac{1}{z-e^{\imath p}} + O(1)
= \frac{C}{2(e^{2\imath p}-1)}\,\frac{|1-z_\circ e^{-\imath p}|^2}{z-e^{\imath p}} + O(1).
\end{equation*}
Using \eqref{KeyI} we get \eqref{eq:p2}. The remaining cases follow by the same Taylor expansion:
near \(z=1\), \(1-z^2=-2(z-1)+\cdots\);
near \(z=-1\), \(1-z^2=2(z+1)+\cdots\);
near \(z=-e^{\imath p}\), use
\(e^{2\imath p}-z^2 = 2 e^{\imath p}(z+e^{\imath p})+\cdots\).
In each case the numerator collapses to the corresponding squared modulus,
e.g. \(|1\mp z_\circ|^2\) or \(|1\mp z_\circ e^{-\imath p}|^2\).
\end{proof}

\subsection*{Asymptotics for the antiderivative}
Let \(H(z)=\displaystyle\int_{0}^{z} K(\zeta)\,\mathrm{d}\zeta\) along any path avoiding the poles.
\begin{lemma}\label{lem:log}
Suppose \(K(z)=\dfrac{A}{z-z_0}+O(1)\) with \(A=\imath C\) and \(C\in\mathbb{R}\). Then
\[
\Im H(z)= C\,\log|z-z_0| + O(1),\qquad z\to z_0.
\]
\end{lemma}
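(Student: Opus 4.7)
The plan is to split $K$ into its polar part and a bounded remainder, integrate each piece separately, and read off the imaginary part.

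First, I would write $K(z)=\dfrac{A}{z-z_0}+R(z)$, where $R$ is bounded on some punctured disk $D^*=\{0<|z-z_0|<r\}$ that contains no other pole of $K$. Fix once and for all a point $z_1\in\partial D$ and a path $\gamma_1$ from $0$ to $z_1$ avoiding all poles of $K$; this contributes a fixed constant $H(z_1)=\int_{\gamma_1}K(\zeta)\,d\zeta$ which is absorbed into the $O(1)$ term. For $z\in D^*$, I would extend $\gamma_1$ by any path $\gamma_2$ from $z_1$ to $z$ lying in $D^*$, so that
\[
H(z)=H(z_1)+A\int_{\gamma_2}\frac{d\zeta}{\zeta-z_0}+\int_{\gamma_2}R(\zeta)\,d\zeta.
\]

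Next, since $R$ is bounded on $D^*$ and we may choose $\gamma_2$ to have length comparable to the diameter of $D$, the last integral is uniformly bounded, hence $O(1)$. For the middle integral, the antiderivative of $1/(\zeta-z_0)$ along $\gamma_2$ is a branch of $\log(\zeta-z_0)$; with the branch determined by the path, we obtain
\[
\int_{\gamma_2}\frac{d\zeta}{\zeta-z_0}=\log|z-z_0|+\imath\,\arg_{\gamma_2}(z-z_0)-\log|z_1-z_0|-\imath\,\arg_{\gamma_2}(z_1-z_0).
\]
Using $A=\imath C$ with $C\in\mathbb R$ and taking imaginary parts gives
\[
\Im\!\left[A\int_{\gamma_2}\frac{d\zeta}{\zeta-z_0}\right]=C\log|z-z_0|-C\,\arg_{\gamma_2}(z-z_0)+\text{const}.
\]

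Finally, I would observe that although the value of $\arg_{\gamma_2}(z-z_0)$ depends on the homotopy class of the path, it remains bounded along any fixed continuous choice of path approaching $z_0$ (if $\gamma_2$ winds, it does so only finitely many times before we look at the asymptotic $z\to z_0$, and after fixing a non-winding tail the argument stays in a bounded interval). Hence the $-C\,\arg_{\gamma_2}(z-z_0)$ term is $O(1)$, which combined with the bounded contributions from $H(z_1)$, the remainder integral, and the constant $-\log|z_1-z_0|-\imath\arg_{\gamma_2}(z_1-z_0)$, yields $\Im H(z)=C\log|z-z_0|+O(1)$.

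There is no real obstacle here; the only mildly delicate point is the path/branch dependence, which I would handle by fixing once and for all a reference point $z_1$ near $z_0$ and noting that the multivalued character of the logarithm only affects the $O(1)$ term, not the leading $C\log|z-z_0|$ coming from the modulus.
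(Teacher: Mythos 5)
Your proof is correct and follows essentially the same route as the paper's: split $K$ into its polar part and a bounded remainder, integrate, use that the antiderivative of the polar part is $A\log(z-z_0)$ with $A=\imath C$, and absorb everything else into $O(1)$ (the paper simply fixes a local branch of $\log$ rather than tracking the path as you do). One small bookkeeping slip: since $A=\imath C$ with $C$ real, the term $-C\arg_{\gamma_2}(z-z_0)$ is the \emph{real} part of $A\log(z-z_0)$ and should not appear in your expression for the imaginary part at all --- but as you then argue that this term is bounded anyway, the conclusion is unaffected.
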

\begin{proof}
Write \(g(z)=\dfrac{A}{z-z_0}+h(z)\) with \(h(z)=O(1)\). Then
\(
F(z)=A\log(z-z_0)+\int h(\zeta)\,\mathrm{d}\zeta
\)
locally, for a fixed branch of \(\log\). Since \(\int h=O(|z-z_0|)\),
\(
\Im F(z)=\Im\!\big(A\log(z-z_0)\big)+O(1)=C\log|z-z_0|+O(1).
\)
\end{proof}

\begin{corollary}[Logarithmic laws for \(\Im H\)]
With the constants
$$
\Lambda=\dfrac{\cosh\!\bigl(\tfrac{s-t}{2}\bigr)\bigl(\cos m+\cosh\!\bigl(\tfrac{s+t}{2}\bigr)\bigr)}{4\pi\,\sin p}
$$
and
$$
C_1=\Lambda|1-z_\circ|^{2},\;
C_2=\Lambda|1-z_\circ e^{-\imath p}|^{2},\;
C_3=\Lambda|1+z_\circ|^{2},\;
C_4=\Lambda|1+z_\circ e^{-\imath p}|^{2},
$$
we have, as \(z\) approaches each pole,
\begin{align*}
z\to 1:&\quad \Im H(z)= +\,C_1\,\log|z-1|+O(1),\\
z\to e^{\imath p}:&\quad \Im H(z)= -\,C_2\,\log|z-e^{\imath p}|+O(1),\\
z\to -1:&\quad \Im H(z)= +\,C_3\,\log|z+1|+O(1),\\
z\to -e^{\imath p}:&\quad \Im H(z)= -\,C_4\,\log|z+e^{\imath p}|+O(1).
\end{align*}
In particular $$\lim_{z\to \pm 1}\Im H(z)=+\infty $$ $$\lim_{z\to \pm e^{\imath p}}\Im H(z)=-\infty .$$ 
\end{corollary}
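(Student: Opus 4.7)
The plan is to derive the corollary as a direct consequence of Lemma~\ref{lem:log} applied to each of the four principal-part expressions supplied by Proposition~\ref{prop:pp}. The argument is essentially mechanical and hinges on a careful bookkeeping of signs and on the positivity of the auxiliary constants.

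First I would read off from Proposition~\ref{prop:pp} the residue of $K$ at each of the four poles. At $z = 1$ the residue is $i\Lambda|1-z_\circ|^2$, at $z = -1$ it is $i\Lambda|1+z_\circ|^2$, at $z = e^{ip}$ it is $-i\Lambda|1-z_\circ e^{-ip}|^2$, and at $z = -e^{ip}$ it is $-i\Lambda|1+z_\circ e^{-ip}|^2$. Each residue is purely imaginary, of the form $iC$ with $C \in \mathbb{R}$, which is precisely the hypothesis of Lemma~\ref{lem:log}. A direct application of the lemma in a deleted neighborhood of each pole then yields the four claimed logarithmic expansions $\Im H(z) = \pm C_j \log|z-z_j| + O(1)$, with the prefactor sign inherited from the imaginary sign of the residue, and with $C_j$ equal to $\Lambda$ times the corresponding squared modulus.

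To upgrade these asymptotics into the $\pm\infty$ limits I would verify the strict positivity of each $C_j$. Since $|z_\circ| < 1$ by Lemma~\ref{lem:z0}, none of the factors $1 \pm z_\circ$ or $1 \pm z_\circ e^{-ip}$ vanishes, so the four squared moduli are strictly positive. The universal prefactor $\Lambda$ is positive because $\cosh\!\bigl(\tfrac{s-t}{2}\bigr) \ge 1$, $\sin p > 0$ for $p \in (0,\pi)$, and $\cos m + \cosh\!\bigl(\tfrac{s+t}{2}\bigr) > 0$: under the convention $m \in [0,\pi/2]$ adopted earlier we have $\cos m \ge 0$, so the sum is at least $1$. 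Hence $C_j > 0$ for $j = 1, \ldots, 4$, and the divergence of $\Im H$ at each pole is controlled solely by the sign in front of $C_j$, since $\log|z - z_j| \to -\infty$ as $z \to z_j$.

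The main subtlety is not analytic but rather sign bookkeeping: one must match the sign of the imaginary coefficient of each residue in Proposition~\ref{prop:pp} to the prefactor in the logarithmic expansion, and then combine that prefactor with the divergence $\log|z-z_j|\to -\infty$ to arrive at the correct alternating $\pm\infty$ limit between the two pairs of poles $\{\pm 1\}$ and $\{\pm e^{ip}\}$. Once this is carried out, the Scherk-type asymptotic behavior of $\Im H$---and therefore of the vertical coordinate $T = \Im\int_0^z K(\zeta)\,d\zeta$ of the minimal graph $\Sigma$---follows immediately, and the logarithmic growth rates $C_j$ are explicitly expressed in terms of $z_\circ$, $p$, and the original vertex data through the formula for $\Lambda$.
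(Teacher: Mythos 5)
Your strategy is exactly the intended one: read the residues off Proposition~\ref{prop:pp}, note that each has the form $\imath C$ with $C$ real, apply Lemma~\ref{lem:log} pole by pole, and then check that $\Lambda>0$ and $|z_\circ|<1$ so that each $C_j$ is strictly positive. That part of your argument is correct and coincides with what the paper does implicitly.

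However, the final step --- deducing the $\pm\infty$ limits --- does not go through as you assert it. You correctly observe that $\log|z-z_j|\to-\infty$ as $z\to z_j$ and that every $C_j>0$; but then the expansion $\Im H(z)=+\,C_1\log|z-1|+O(1)$ forces $\Im H(z)\to-\infty$ as $z\to 1$, not $+\infty$, and likewise $\Im H(z)=-\,C_2\log|z-e^{\imath p}|+O(1)$ forces $\Im H(z)\to+\infty$ at $e^{\imath p}$. In other words, the four displayed logarithmic laws and the concluding ``in particular'' limits are mutually inconsistent once $C_j>0$ is established: you get exactly the opposite alternation from the one claimed. Your write-up asserts that the signs ``arrive at the correct alternating $\pm\infty$ limit'' without carrying out this last substitution, and carrying it out exposes the discrepancy. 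To close the gap you must either locate a compensating sign upstream (e.g.\ verify the sign of $\Lambda$, equivalently of $C/(e^{2\imath p}-1)$ in \eqref{KeyI}, or re-derive the residue signs in Proposition~\ref{prop:pp}) or conclude that the displayed expansions and the stated limits cannot both hold as written. As it stands, the deduction of the limits is the one step of the corollary that actually requires care, and it is the step your proposal leaves unverified.
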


\section{A proof of Theorem~\ref{etreta}}
We start with the reminder that the harmonic center was defined in the Introduction.
\begin{lemma}
Let $Q=Q(b_1,b_2,b_3,b_4)$ be a Pitot quadrilateral and denote its harmonic center by $c_0$. 
Define \begin{align}
w &= u+\imath v
    = \frac{2b_4 - b_1 - b_3}{b_3 - b_1}
    = \sin m \cosh s + \imath\,\cos m \sinh s,
    \label{tsm-w} \\[4pt]
z &= x+\imath y
    = \frac{2b_2 - b_1 - b_3}{b_3 - b_1}
    = \sin m \cosh t + \imath\,\cos m \sinh t.
    \label{tsm-z}
\end{align}
where $2\sin m=|z+1|-|z-1|=|w+1|-|w-1|$. Let $j=(s-t)/2$ and $k=(s+t)/2$.
Then the curvature of the minimal graph $\Sigma$ at the point $\xi$, above the harmonic center of $Q$ is given by \begin{equation}\label{desire}\mathcal{K}^0(m)=-\frac{1}{4} \pi ^2 \cos^2 m  \coth^2 j\,\mathrm{sech}^4 k.\end{equation}
Moreover, 
\[
|\mathcal {K}^{Q}(m)| \;\le\; |\mathcal{K}^{Q}(0)|
   = \frac{\pi^{2}}{4}\,\coth^{2} j\,\mathrm{sech}^{4} k .
\]
\end{lemma}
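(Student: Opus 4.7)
The plan is to apply the Gauss curvature formula for a minimal graph in Enneper--Weierstrass form to the data of Theorems~\ref{teo1}--\ref{teo2} at $z=0$, which corresponds to the point $\xi$ above the harmonic center $c_0=f(0)$, and then to simplify using Lemma~\ref{lem:z0}. Since $\mathbf{p}=h'$ is holomorphic and the conformal factor of the induced metric is $\lambda=|\mathbf{p}|(1+|\mathbf{q}|^{2})$, the identity $\Delta\log(1+|\mathbf{q}|^{2})=4|\mathbf{q}'|^{2}/(1+|\mathbf{q}|^{2})^{2}$ together with $\Delta\log|h'|=0$ yields the standard formula
\[
\mathcal{K}(z)=-\,\frac{4|\mathbf{q}'(z)|^{2}}{|h'(z)|^{2}\bigl(1+|\mathbf{q}(z)|^{2}\bigr)^{4}}.
\]

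Evaluating at $z=0$ requires the three quantities $|\mathbf{q}(0)|$, $|\mathbf{q}'(0)|$, $|h'(0)|$. Since $\mathbf{q}(z)=X(z-z_{\circ})/(1-z\overline{z_{\circ}})$ with $|X|=1$, one has $|\mathbf{q}(0)|=|z_{\circ}|$ and $|\mathbf{q}'(0)|=1-|z_{\circ}|^{2}$; Lemma~\ref{lem:z0} already gives
\[
|z_{\circ}|=\frac{\cosh k-|\cos m|}{\cosh k+|\cos m|},
\]
so that $1\pm|z_{\circ}|^{2}$ are rational functions of $\cosh k$ and $|\cos m|$. The third ingredient, $|h'(0)|$, must be reduced from the closed form
\[
h'(0)=\frac{1}{\pi}\!\left(-2\imath+\frac{-2-\sin(m+\imath s)+\sin(m+\imath t)}{\imath+\sinh j}\right)
\]
derived in the reduction, by substituting $\sin(m+\imath\tau)=\sin m\cosh\tau+\imath\cos m\sinh\tau$ and invoking the identity $e^{\imath p}=(\imath+e^{j})^{2}/(-\imath+e^{j})^{2}$. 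After assembling the three pieces in the curvature formula, the factors $(\cosh k+|\cos m|)^{r}$ and $(\cosh^{2} k+\cos^{2} m)^{r}$ should cancel cleanly and leave exactly
\[
\mathcal{K}^{0}(m)=-\frac{\pi^{2}}{4}\,\cos^{2}m\,\coth^{2}j\,\mathrm{sech}^{4}k.
\]

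The sharp bound is then immediate: the dependence on $m$ in $\mathcal{K}^{0}(m)$ is confined to the single factor $\cos^{2}m\in[0,1]$, which attains its maximum $1$ at $m=0$, whereas the remaining factor $\coth^{2}j\,\mathrm{sech}^{4}k$ is strictly positive and $m$-independent. Consequently $|\mathcal{K}^{Q}(m)|\le|\mathcal{K}^{Q}(0)|=\tfrac{\pi^{2}}{4}\coth^{2}j\,\mathrm{sech}^{4}k$, as claimed.

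The main obstacle is the algebraic reduction of $|h'(0)|$ into a product of elementary hyperbolic terms in $\cosh k$, $|\cos m|$, $\sinh j$, $\cosh j$. One must track the two sign cases $m\in[0,\pi/2]$ versus $m\in[\pi/2,\pi]$ (both appearing in Lemma~\ref{lem:z0} and in the formula for $X$) so that the final answer is manifestly independent of $m$ except through $\cos^{2}m$. Once $|h'(0)|$ is in the anticipated closed form, the cancellation against $(1-|z_{\circ}|^{2})^{2}$ in the numerator and $(1+|z_{\circ}|^{2})^{4}$ in the denominator is mechanical and produces the stated curvature.
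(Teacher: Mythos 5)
Your overall strategy coincides with the paper's: both evaluate the curvature formula $\mathcal{K}=-4|\mathbf{q}'(0)|^{2}/\bigl(|h'(0)|^{2}(1+|\mathbf{q}(0)|^{2})^{4}\bigr)$ at $z=0$, and your use of the disk-automorphism identities $|\mathbf{q}(0)|=|z_\circ|$ and $|\mathbf{q}'(0)|=1-|z_\circ|^{2}$ is a legitimate shortcut past the paper's explicit closed forms \eqref{qn} for $\mathbf{q}(0)$ and $\mathbf{q}'(0)$. There is, however, a genuine gap at the one place where you import a numerical value: the expression you quote from Lemma~\ref{lem:z0} is not $|z_\circ|$ but $|z_\circ|^{2}$. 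From the displayed formula
\[
z_\circ=\frac{\bigl(e^{s/2}+ie^{t/2}\bigr)\bigl(-1+e^{im+k}\bigr)}{\bigl(e^{s/2}-ie^{t/2}\bigr)\bigl(1+e^{im+k}\bigr)}
\]
the first fraction is unimodular and $\bigl|\tfrac{-1+e^{im+k}}{1+e^{im+k}}\bigr|^{2}=\tfrac{\cosh k-\cos m}{\cosh k+\cos m}$, so in fact $|z_\circ|=\bigl(\tfrac{\cosh k-|\cos m|}{\cosh k+|\cos m|}\bigr)^{1/2}$; this is also forced by consistency with \eqref{qn}, which gives $|\mathbf{q}(0)|^{2}=\tfrac{\cosh k-\cos m}{\cosh k+\cos m}$ together with $|\mathbf{q}(0)|=|z_\circ|$. (The final display of Lemma~\ref{lem:z0} is thus a typo in the paper; it is harmless for that lemma's conclusion $|z_\circ|<1$, but it is fatal for your computation, which hinges on the exact value.)

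Concretely, with your value of $|z_\circ|$ the claimed ``mechanical cancellation'' does not occur: one obtains
\[
|\mathcal{K}(0)|=\frac{\pi^{2}\cosh^{2}k\,\cos^{2}m\,(\cosh k+\cos m)^{2}\coth^{2}j}{(\cosh^{2}k+\cos^{2}m)^{4}},
\]
which disagrees with \eqref{desire} (for $m=0$ the two expressions differ by a factor tending to $4$ as $k\to\infty$). With the corrected value one has $1+|\mathbf{q}(0)|^{2}=\tfrac{2\cosh k}{\cosh k+\cos m}$ and $|\mathbf{q}'(0)|=1-|\mathbf{q}(0)|^{2}=\tfrac{2\cos m}{\cosh k+\cos m}$, and your reduction of $h'(0)$ does yield $|h'(0)|=\tfrac{2|\tanh j|\,(\cosh k+\cos m)}{\pi}$ via the identity $(1+\cosh k\cos m)^{2}+\sinh^{2}k\,\sin^{2}m=(\cosh k+\cos m)^{2}$; assembling these three pieces then gives exactly \eqref{desire}. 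Your treatment of the ``moreover'' inequality (the $m$-dependence sits entirely in the factor $\cos^{2}m\le 1$) is correct as stated.
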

\begin{proof}
We use the formula for the Gaussian curvature
\begin{equation}\label{kurva}
\mathcal{K} = -\frac{4\,|{\bf q}'(z)|^{2}}{|{\bf p}(z)|^{2}\left(1+|{\bf q}(z)|^{2}\right)^{4}}.
\end{equation} By carrying out a lengthy but  straightforward computation and using the previously established constants, we obtain
\begin{equation}\label{qn}
{\bf q}(0)
=
-i\,\operatorname{sech}\!\left(\tfrac{1}{2}(k - i m)\right)
    \sinh\!\left(\tfrac{1}{2}(k + i m)\right),
\end{equation}
\[
{\bf q}'(0)
=
\frac{(1 + i e^{j})\,\cos m}{i + e^{j}}\,
\operatorname{sech}^{2}\!\left(\tfrac{1}{2}(k - i m)\right),
\]
and 
\[
h'(0)
=
-\frac{2 i\,(e^{2j}-1)\,\bigl(1+\cosh(k - i m)\bigr)}
       {(i+e^{j})^{2}\,\pi}.
\]
By substitution in \eqref{kurva} we get \eqref{desire}.
\end{proof}
We have already established a harmonic diffeomorphism of the unit disk onto a Pitot quadrilateral $Q=Q(b_1,b_2,b_3,b_4)$, whose dilatation is a square of a M\"obius transformation $\phi$. The harmonic center $c_0$  of $Q$ is the unique point $c_0=f(0)\in Q$. Let $\xi$ be the point of the corresponding minimal graph $\Sigma_Q$, determined in Theorem~\ref{teo2}, above $c_0$. In our situation  \[c_0=
\frac{1}{2\pi}\int_{0}^{2\pi} F(\theta)\,d\theta
=
\frac{1}{2\pi}  (p b_2+(\pi - p) b_3+p b_4 +(\pi - p)b_1)=\frac{p}{\pi}(z + w).
\]

Using a formula from \cite{kalaj2025gaussian}, we have
\begin{equation}\label{fexpli}
{f}_{uv}
= -\frac{2\,\Re\!\left[{h'(0)}\,(1-{{\bf q}(0)}^{4})\,\overline{{\bf q}'(0)}\right]}
       {|{h'(0)}|^{2}\,(1-|{{\bf q}(0)}|^{2})^{3}\,(1+|{{\bf q}(0)}|^{2})}.
\end{equation}
Evaluated at the point $c_0$, this becomes
\[
{f}_{uv}(c_0)
    = \frac{1}{4}\,\pi\,\coth(j)\,\sec(m).
\]

 \begin{figure}
\centering
\includegraphics[scale=0.6]{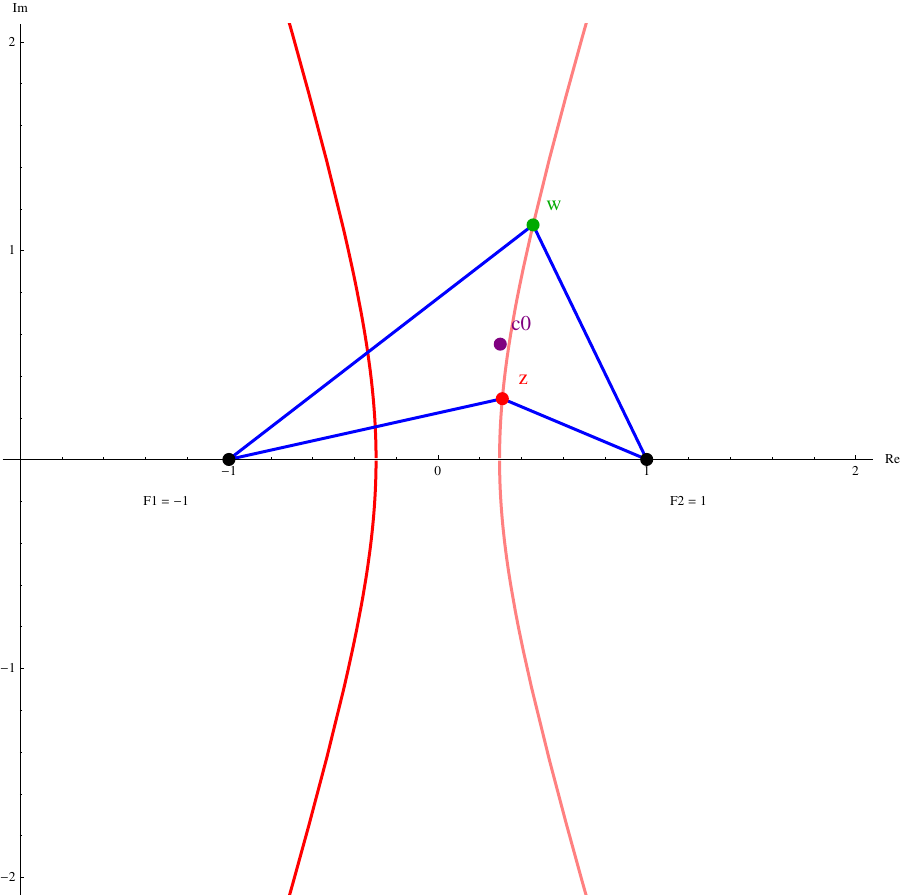}
\caption{The quadrilateral whose opposite vertices are the foci $-1$ and $1$,
and whose remaining vertices lie on the same branch of the hyperbola $|\zeta+1|-|\zeta-1|=2\sin m$, i.e. of 
\(
x^{2}\csc^{2} m - y^{2}\sec^{2} m = 1.
\)}
\label{quad}
\end{figure}
\begin{remark}
Consider a rotation of the quadrilateral $Q$ by an angle $\alpha$.
The corresponding harmonic mapping is
\[
f_\alpha(z)=e^{i\alpha} f(z),
\]
so
\[
{{\bf p}}_\alpha(0)=\partial f_\alpha(0)=e^{i\alpha}{{\bf p}}(0),
\qquad
{{\bf q}}_\alpha(z)=e^{-i\alpha}{{\bf q}}(z).
\]

A straightforward computation then yields
\[
\begin{split}&-\frac{
2\,\Re\!\left[
h'_\alpha(1-{{\bf q}}_\alpha^{4})\,
\overline{{{\bf q}}_\alpha'}
\right]
}{
|h'_\alpha|^{2}
(1-|{\bf q}_\alpha|^{2})^{3}
(1+|{\bf q}_\alpha|^{2})
}
\\&=
\frac{\pi (1+e^{2j})\bigl(e^{2k}\cos(2\alpha-m)+\cos(2\alpha+m)\bigr)}{16 e^{j+k}\sinh j \cos^2 m \cosh k}\,\,
\,
\,.\end{split}
\]

Thus, by choosing
\[
\alpha
=
\pm\arccos\!\left(
\pm
\sqrt{
\frac12
\pm
\frac{\sin(m)\,\sinh(k)}%
     {\sqrt{2}\,\sqrt{\cos(2m)+\cosh(2k)}}
}
\right),
\]
the mixed derivative of the rotated map $\mathbf{f}_\alpha$ at
\[
c_\alpha = e^{i\alpha} c_0
\]
vanishes.
We denote the rotated quadrilateral by
\[
Q_\alpha = e^{i\alpha} Q.
\]
We translate it by $c_\alpha$ and get $Q_0 = Q_\alpha-c_\alpha$. Then $0$ is the harmonic center of $Q_0$.
Now, the new graph over $Q_0$ satisfies the condition $\mathbf{f}_{uv}(0,0)=0$, which means that at the origin, the local coordinate system formed by the $u$ and $v$ axes is already aligned with the principal directions. This simplifies calculations and provides a clear geometric interpretation: the surface is bending maximally in one coordinate direction and minimally (with opposite sign curvature) in the other, and these directions are aligned with the chosen $u$ and $v$ axes.  We say that such a graph is \emph{bending in coordinate directions at zero}. Notice that every minimal graph can be transformed into such a graph by an appropriate rotation around the $z$-axis.
\end{remark} Then the stereographic projection of ${\bf q}(0)$, from \eqref{qn} is
\[
N_\xi =\frac{1}{1+|{\bf q}(0)|^2}\left(\Re {\bf q}(0), \Im {\bf q}(0), (1-|{\bf q}(0)|^2)\right)=
\left(
\sin m,
-\cos m \,\tanh k,
\cos m \,\operatorname{sech} k
\right),
\] 
which is the unit normal of $\Sigma^\diamond$ at the point $\xi=(c_0, \mathbf{f}^\diamond(c_0))$.

Now Theorem~\ref{etreta} is a direct consequence of the following theorem.
\begin{theorem}\label{kater}
Assume that $\Sigma=\{(u,v, \mathbf{f}(u,v)): (u,v)\in Q\}$ is any bounded minimal graph above $Q$. Then for $\xi = (c_0, \mathbf{f}(c_0))$ we have $$\left|\mathcal{K}(\xi)\right| <  \frac{\pi ^2 \cos^2 m  \coth^2 j\,\mathrm{sech}^4 k}{|b_1-b_3|^2}$$ provided that the unit normal of $\Sigma$ at $\xi$ is $$N_\xi=\left(
\sin m,
-\cos m \,\tanh k,
\cos m \,\operatorname{sech} k
\right)$$ and $\mathbf{f}_{u,v}(c_0)=\frac{1}{4}\,\pi\,\coth(j)\,\sec(m)$. Here, the constants $k=(s+t)/2$, $j=(s-t)/2$, $t$, $s$ and $m$ depend on $b_1,b_2,b_3$ and $b_4$ and can be explicitly determined from \eqref{tsm-z} and \eqref{tsm-w}. The result is sharp.
\end{theorem}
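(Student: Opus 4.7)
The plan is to reduce the curvature inequality, via the Weierstrass representation, to the Schwarz--Pick lemma applied to the Gauss map $\mathbf{q}:\mathbb{D}\to\mathbb{D}$. First I would express
$|\mathcal{K}(\xi)| = 4|\mathbf{q}'(0)|^{2}/\bigl(|h'(0)|^{2}(1+|\mathbf{q}(0)|^{2})^{4}\bigr)$
using equation~\eqref{kurva} together with $\mathbf{p}(0)=h'(0)$. The matching unit-normal condition at $\xi$ fixes $\mathbf{q}(0)=q$ by inverse stereographic projection, so that $|q|$ and $\arg q$ are determined by $m,k$; and, using~\eqref{fexpli}, the matching mixed derivative at $c_{0}$ translates into the common real linear constraint
\[
\Re\!\bigl[(1-q^{4})\,\overline{R}\bigr] \;=\; -\tfrac{1}{2}\,\mathbf{f}_{uv}(c_{0})\,(1-|q|^{2})^{3}(1+|q|^{2}),\qquad R := \mathbf{q}'(0)/h'(0),
\]
satisfied by both $\Sigma$ and $\Sigma^{\diamond}$. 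The curvature comparison thus reduces to comparing $|R|$ for the two graphs under these joint constraints.

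Next I invoke Schwarz--Pick: because the unit normal stays in the open upper hemisphere, $\mathbf{q}$ is a holomorphic self-map of $\mathbb{D}$, so $|\mathbf{q}'(0)|\le 1-|q|^{2}$, with equality if and only if $\mathbf{q}$ is a M\"obius automorphism of $\mathbb{D}$. By Theorem~\ref{teo1}(i), this equality case is realized precisely by $\Sigma^{\diamond}$. For any \emph{bounded} minimal graph $\Sigma$ the equality must fail: an automorphism of $\mathbb{D}$ would force $|\mathbf{q}|\to 1$ on $\partial\mathbb{D}$, hence vertical tangent planes along $\partial Q$, and via the height formula $T=2\Im\int h'\mathbf{q}\,d\zeta$ would produce exactly the Scherk-type blow-up $\mathbf{f}\to\pm\infty$ of Theorem~\ref{teo2} along alternating sides of $\partial Q$---contradicting boundedness. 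Hence $|\mathbf{q}'(0)|_{\Sigma}<1-|q|^{2}$ strictly.

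Combining the ingredients: the explicit Weierstrass data for $\Sigma^{\diamond}$ (equation~\eqref{qn} together with the closed forms for $\mathbf{q}'_{\diamond}(0)$ and $h'_{\diamond}(0)$ derived in Section~5) show that $R_{\diamond}$ saturates both the Schwarz--Pick bound and the real linear constraint simultaneously, realizing the extremal admissible value of $|R|$. The strict Schwarz--Pick inequality for bounded $\Sigma$ then forces $|R_{\Sigma}|<|R_{\diamond}|$, whence $|\mathcal{K}(\xi)|<|\mathcal{K}^{\diamond}(\xi)|$ strictly. The closed-form value $\pi^{2}\cos^{2}m\,\coth^{2}j\,\operatorname{sech}^{4}k/|b_{1}-b_{3}|^{2}$ is obtained by substituting the explicit data into the curvature formula and undoing the preliminary normalization $b_{1}=-1,\,b_{3}=+1$ via the rescaling factor $|b_{1}-b_{3}|^{2}/4$. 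Sharpness is obtained by a family of bounded minimal graphs whose Weierstrass data converge to $(h'_{\diamond},\mathbf{q}_{\diamond})$ and whose curvatures at $\xi$ therefore tend to $|\mathcal{K}^{\diamond}(\xi)|$ without ever attaining it.

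The principal obstacle is verifying that $R_{\diamond}$ truly realizes the extremal $|R|$ under the joint constraints. This is \emph{not} immediate from the linear real constraint alone, which on its own permits $|R|$ to range over an unbounded half-line; one must establish that $R_{\diamond}$ lies at the point on this half-line singled out by the Schwarz--Pick equality, i.e.\ that the phase of $R_{\diamond}$ is aligned with the complex direction of $1-q^{4}$. This alignment is dictated by the Pitot identity together with the explicit M\"obius phase $X$ from~\eqref{XXX}, and verifying it rigorously uses the closed forms for $h'_{\diamond}(0)$ and $\mathbf{q}'_{\diamond}(0)$ and the hyperbolic parametrization developed in Sections~2--5.
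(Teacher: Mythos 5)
Your proposal takes a genuinely different route from the paper, and it contains a gap that I do not see how to close. The curvature formula \eqref{kurva} has $|\mathbf{p}(0)|^{2}=|h'(0)|^{2}$ in the denominator, so what must be bounded from above is $|R|=|\mathbf{q}'(0)|/|h'(0)|$, not $|\mathbf{q}'(0)|$ alone. Schwarz--Pick controls only the numerator; for an arbitrary bounded minimal graph over $Q$ with the prescribed normal at $c_0$ there is no a priori lower bound on $|h'(0)|$, so a competitor could have strictly smaller $|\mathbf{q}'(0)|$ and yet strictly larger $|\mathcal{K}(\xi)|$. The auxiliary constraint you extract from \eqref{fexpli} does not repair this: it confines $R$ to a real-affine line $\Re\bigl[(1-q^{4})\,\overline{R}\bigr]=\mathrm{const}$ in the complex plane, on which $|R|$ is bounded \emph{below} (at the foot of the perpendicular) but unbounded \emph{above} --- i.e.\ it yields an inequality in the wrong direction. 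You acknowledge this obstacle in your last paragraph, but the proposed resolution (phase alignment of $R_{\diamond}$ with $1-q^{4}$) only identifies where $\Sigma^{\diamond}$ sits on that line; it imposes nothing on the competitor's $R$, which remains free to run off along the line. This is precisely why the sharp curvature bound is not a Schwarz--Pick statement.

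The paper argues quite differently, by contradiction via a Finn--Osserman comparison. Assuming $|\mathcal{K}(\xi)|\ge|\mathcal{K}^{\diamond}(\xi)|$, one rescales by a homothety $\lambda\ge 1$ to force equality of the curvatures, then uses Lemma~\ref{lemab} (equal normals, equal mixed derivatives, equal curvatures, plus minimality) to conclude that the difference $\phi$ of the two height functions vanishes to second order at $c_0$. Since $\phi$ solves a linear uniformly elliptic equation, the Finn--Osserman normal form gives $\phi=\Re\{\zeta^{N}\}$ with $N\ge 3$ near $c_0$, so the nodal set splits a neighborhood into at least six alternating sectors; but the Scherk-type boundary behavior of $\mathbf{f}^{\diamond}$ (blow-up to $\pm\infty$ on alternating sides, versus boundedness of $\mathbf{f}$) allows at most four components of $\{\phi\neq 0\}$ to reach the sides of $Q$, forcing a component with purely interior boundary and contradicting the maximum principle. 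If you want to salvage your write-up, you would need to replace the Schwarz--Pick step with an argument of this comparison type; as it stands, the central inequality $|R_{\Sigma}|<|R_{\diamond}|$ is asserted but not proved.
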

 \begin{figure}
\centering

\includegraphics{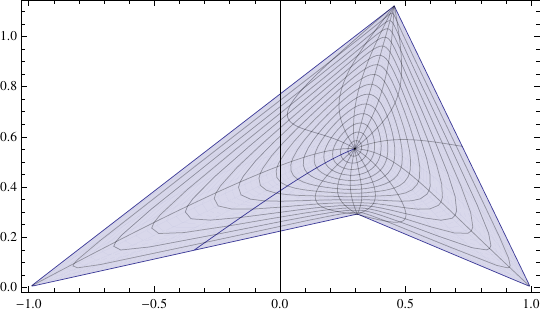}
\caption{The concave quadrilateral $Q(-1,z,1,w)$ for $m=0.3$, $t=0.3$, $s=1$, $c_0= 0.299+ 0.552\imath$.}
\label{quad}
\end{figure}
 \begin{figure}
\centering
\includegraphics[scale=0.6]{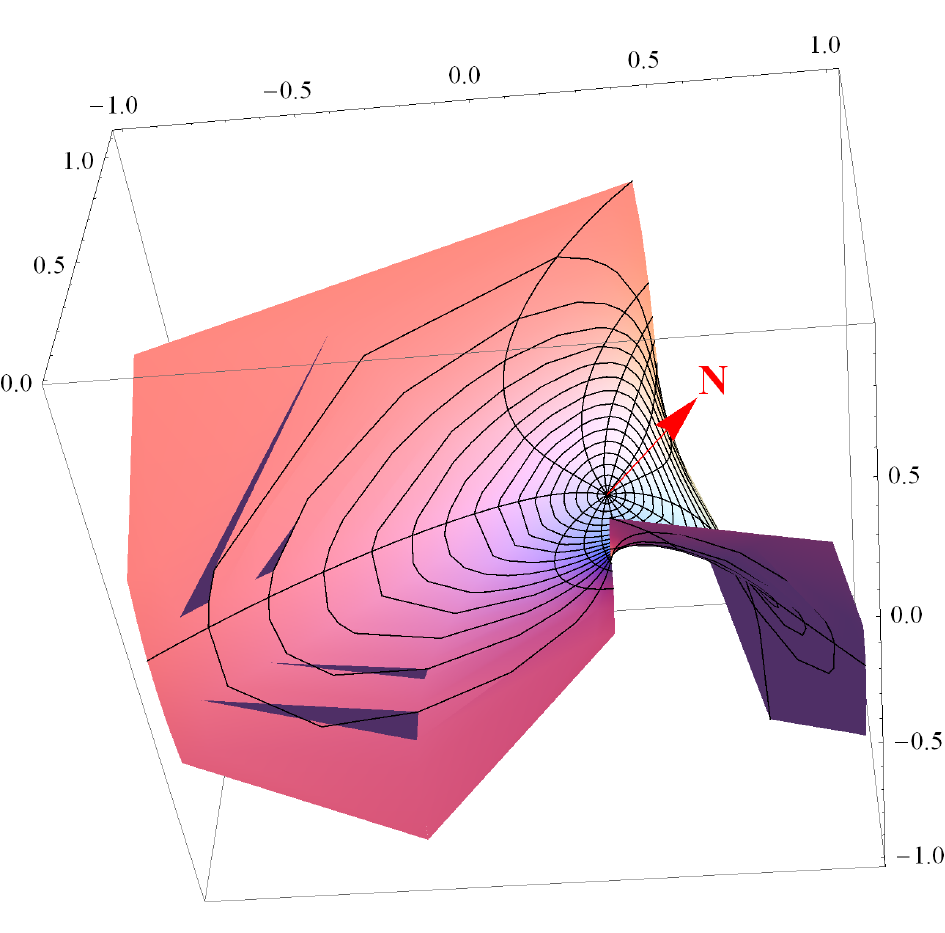}
\caption{The Scherk type surface over the concave $Q$ with its unit normal $N$ at the center.}
\label{quad}
\end{figure}
 \begin{figure}
\centering
\includegraphics{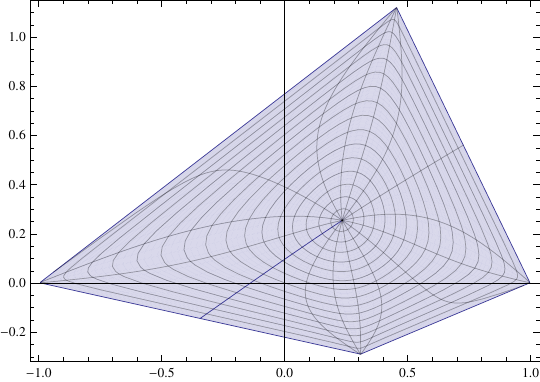}
\caption{The convex quadrilateral $Q(-1,z,1,w)$ for $m=0.3$, $t=-0.3$, $s=1$, $c_0= 0.234 +0.255 \imath$.}
\label{quad}
\end{figure}
 \begin{figure}
\centering
\includegraphics[scale=0.6]{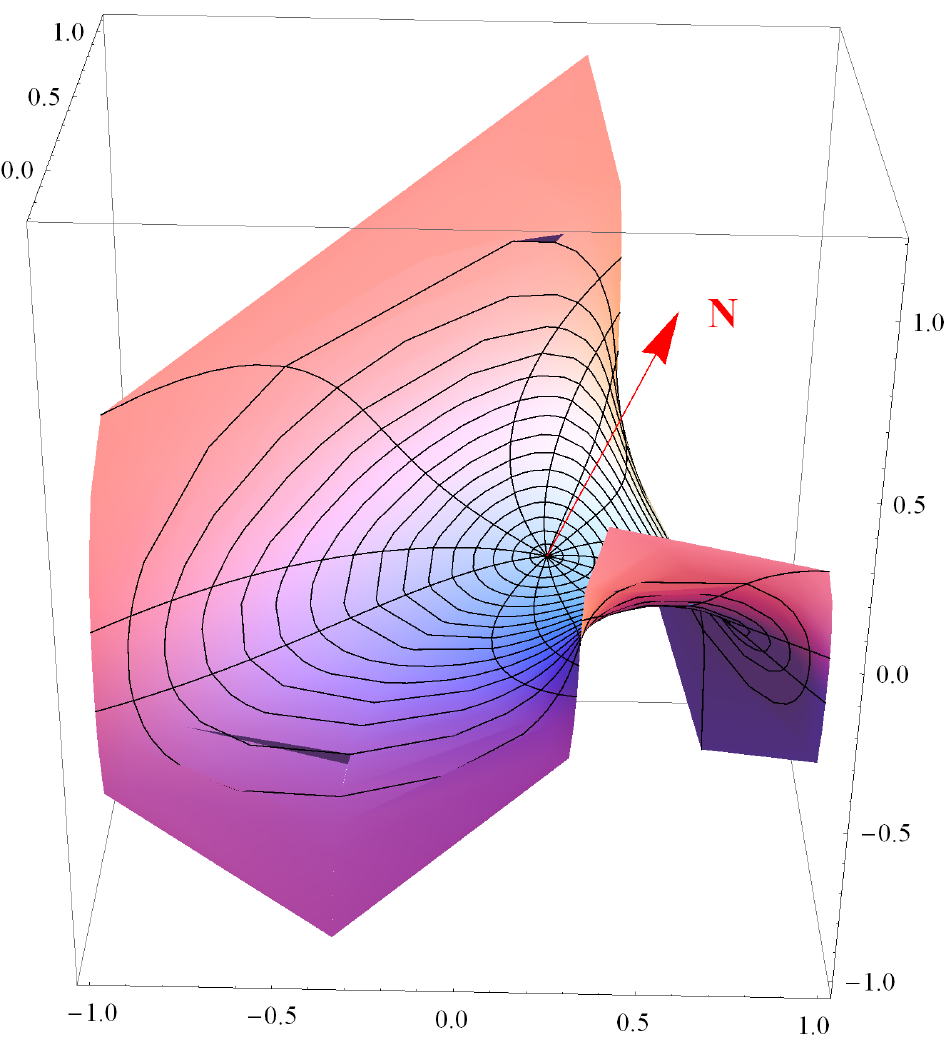}
\caption{The Scherk type surface over the convex $Q$ with its unit normal $N$ at the center.}
\label{quad}
\end{figure}
\begin{proof} As before, we can assume that $b_1=-1$ and $b_2=1$.
Assume the opposite:
$$\left|\mathcal{K}(\xi)\right| \ge   \left|\mathcal{K}^0(\xi)\right|.$$ 
Let $Q = Q(b_1,b_2,b_3,b_4)\subset D\subset\mathbb{R}^2$ be a given quadrilateral. We are given a Scherk type minimal graph
$z\mapsto f_1(z)$ defined over $Q$ and can assume, without loss of generality, that the second is a minimal graph $z\mapsto f_2(z)$
defined over a domain $D$ containing $Q$. Assume that the two graphs
intersect at the point
\[
\xi=(c_0,f_1(c_0)),
\]
and that at $c_0$ the corresponding minimal surfaces $G_{f_1}$ and
$G_{f_2}$ have the same Gaussian  curvature and the same mixed derivatives.

\medskip

We first reduce the situation by a similarity of space. Translate
horizontally so that $c_0$ is moved to the origin, and vertically so
that $f_1(0)=f_2(0)=0$.  In
these new coordinates both surfaces are still graphs over the
$xy$–plane, say $z=\phi_1(x,y)$ and $z=\phi_2(x,y)$, defined at least
in a neighborhood of the origin, and we still have
\[
\phi_1(0,0)=\phi_2(0,0)=0.
\]
Finally, as in Finn and Osserman argument (\cite{5} and  \cite{FINNOSSERMAN}), we apply a homothety of $\mathbb{R}^3$
with the factor $\lambda\ge 1$, so that the two surfaces have equal Gaussian curvatures at the
origin. Observe that such a $\lambda$ expands the domain $D$ and increases the Gaussian curvature. Thus, we may assume from now on that
\[
{\mathcal K}_{G_{\phi_1}}(0,0)={\mathcal K}_{G_{\phi_2}}(0,0).
\]

By assumption, the two graphs have the same unit normals at the origin
and the same mixed vanishing derivatives there. Together with the minimal surface
equation, this implies that their second fundamental forms at the
origin coincide (see Lemma~\ref{lemab})). Hence
\[
\phi_1(0,0)=\phi_2(0,0),\qquad \nabla\phi_1(0,0)=\nabla\phi_2(0,0),
\qquad D^2\phi_1(0,0)=D^2\phi_2(0,0).
\]

\medskip

Consider now the difference
\[
\phi(x,y)=\phi_1(x,y)-\phi_2(x,y).
\]
It is defined in a neighborhood of the origin, and by the previous
remarks, we have
\[
\phi(0,0)=0,\qquad \nabla\phi(0,0)=0,\qquad D^2\phi(0,0)=0.
\]
Moreover, both $\phi_1$ and $\phi_2$ satisfy the minimal surface
equation, thus $\phi$ satisfies a linear uniformly elliptic equation
with analytic coefficients. We are therefore in the situation of
\cite[Lemma~1]{FINNOSSERMAN} with $n=2$ (after a preliminary rotation already been made).
Hence, there exists a homeomorphism of a neighborhood of the origin in
the $xy$–plane onto a neighborhood of the origin in the $\zeta$–plane,
such that
\[
\phi(x,y)=\Re\{\zeta^N\},\qquad N\ge 3,
\]
in some neighborhood of the origin. In particular, the level set
$\phi=0$ near the origin consists of $N$ analytic arcs intersecting
only at the origin, and they divide a small neighborhood of the origin
into $2N$ sectors in which $\phi$ is alternately positive and negative.

\medskip

Next, we use the special boundary behavior of the Scherk type graph
$\phi_1$ over $Q$. By assumption, $\phi_1$ has the usual Scherk
singularities along the sides of $Q$: on two opposite sides, it tends
to $+\infty$, on the other two sides, it tends to $-\infty$, whereas
$\phi_2$ extends smoothly across $Q$ (since it is defined on $D$
containing $Q$). It follows that along each side of $Q$ the function
$\phi=\phi_1-\phi_2$ tends to either $+\infty$ or $-\infty$ while the
other graph remains finite. Consequently, any component of the open set
$\{\phi\neq 0\}$ that meets an interior point of a side of $Q$ must
contain  that entire side. Thus, at most four components of
$\{\phi\neq 0\}$ can have the boundary that intersects the sides of $Q$.
In particular, the set $\{\phi\neq 0\}$ has at most four components
whose boundaries contain interior points of the sides of $Q$.

On the other hand, near the origin, the local description
$\phi=\Re\{\zeta^N\}$ with $N\ge 3$ shows that $\{\phi\neq 0\}$ has at
least six components accumulating at the origin, corresponding to the
$2N$ alternating sectors. If all these components had boundaries that
meet only interior points of $Q$, then by joining suitable sectors we
could construct a component whose boundary is entirely interior to
$Q$, contradicting the maximum principle for solutions of the
underlying linear elliptic equation (compare \cite[p.~322]{8}). Thus, at
least one of these components must have the boundary consisting only of
interior points of $Q$ and one or more of its vertices.

Finally, we apply the maximum principle once more: as in
\cite[Theorem~II.1]{5}, a nontrivial solution $\phi$ of the linearized
equation cannot attain both positive and negative values in a domain
whose boundary consists only of interior points of $D$ and isolated
vertices. Hence, we would again be forced to conclude that
$\phi\equiv 0$ in a neighborhood of the origin, which contradicts the
representation $\phi=\Re\{\zeta^N\}$ with $N\ge 3$.

This contradiction shows that our initial assumption was impossible.
Therefore, there cannot exist a Scherk type minimal graph $f_1$ over
$Q$ and a minimal graph $f_2$ over a domain $D\supset Q$ such that
$G_{f_1}$ and $G_{f_2}$ have at $0$ the same Gaussian curvature, the
same unit normals and the same mixed second derivatives.

\end{proof}

\begin{lemma}\label{lemab}
Let $f^\ast,f^\diamond : U\subset\mathbb{R}^2\to\mathbb{R}$ be minimal graphs.
Assume that at $(0,0)$:
\begin{enumerate}
\item [(i)] their unit normals coincide;
\item [(ii)] their mixed derivatives coincide: $f^\ast_{uv}=f^\diamond_{uv}$;
\item [(iii)] their Gaussian curvatures coincide.
\end{enumerate}
Then at $(0,0)$ either
\[
D^2 f^\ast = D^2 f^\diamond
\qquad\text{or}\qquad
D^2 f^\ast = -\,D^2 f^\diamond,
\]
hence $D^2(f^\ast\pm f^\diamond)(0,0)=0$.
\end{lemma}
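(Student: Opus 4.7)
The plan is to translate the three hypotheses into algebraic conditions on the 2-jets of $f^\ast$ and $f^\diamond$ at the origin and then solve the resulting system. First, since the upward unit normal to a graph $z=f(u,v)$ is $N=(-f_u,-f_v,1)/\sqrt{1+|\nabla f|^2}$, condition (i) is equivalent to $\nabla f^\ast(0)=\nabla f^\diamond(0)$; denote this common value by $(p,q)$. Writing $r := f^\ast_{uv}(0) = f^\diamond_{uv}(0)$ from (ii), the first fundamental forms at the origin automatically coincide, so the graph formula $\mathcal{K} = (f_{uu}f_{vv}-f_{uv}^2)/(1+p^2+q^2)^2$ reduces (iii) to the single product identity
\[
f^\ast_{uu}(0)\,f^\ast_{vv}(0) \;=\; f^\diamond_{uu}(0)\,f^\diamond_{vv}(0).
\]

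Next, I would invoke minimality. At $(0,0)$ both graphs satisfy
\[
(1+q^2)\,f_{uu}(0) \;-\; 2pq\,r \;+\; (1+p^2)\,f_{vv}(0) \;=\; 0,
\]
so the pair $(a,b) := (f_{uu}(0),f_{vv}(0))$ is forced to lie on a fixed affine line in the $(a,b)$-plane and on a common hyperbola $ab=\mathrm{const}$; this leaves at most two candidate points for each graph. The key observation is that the minus alternative $D^2 f^\ast = -D^2 f^\diamond$ is compatible with (ii) only when $r=0$, since otherwise $r=-r$ would fail; the configuration in the application to Theorem~\ref{kater} is arranged to deliver exactly this vanishing via the preliminary rotation about the $z$-axis, which makes the mixed derivative of $f^\diamond$ at the origin vanish and hence, through (ii), also that of $f^\ast$.

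In this $r=0$ regime the minimal surface equation degenerates to $b=-\tfrac{1+q^2}{1+p^2}\,a$, and substitution into the product identity yields $a^{2}=-\tfrac{1+p^2}{1+q^2}\,D$ where $D$ is the common product. This is a single equation for $a^{2}$, so $(a^\ast)^2=(a^\diamond)^2$; hence $a^\ast=\pm a^\diamond$, and the sign propagates to $b$ via the linear relation. Combined with $r=0$, this yields $D^{2} f^\ast = \pm D^{2} f^\diamond$, whence $D^{2}(f^\ast\pm f^\diamond)(0,0)=0$ as claimed. The main obstacle is precisely the reduction to $r=0$: outside this regime the two intersection points of the line and the hyperbola are generically not negatives of one another, so the clean dichotomy in the statement leans essentially on the \emph{bending in coordinate directions} normalization supplied by the rotation argument that precedes the invocation of the lemma.
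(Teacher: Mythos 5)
Your algebraic setup is the same in spirit as the paper's: pin the off-diagonal entry by (ii), get a linear (trace) relation on $(f_{uu},f_{vv})$ from minimality and a quadratic (determinant) relation from the curvature, and intersect. The difference is in how the gradient is handled. The paper first subtracts the common linear part $\alpha u+\beta v$ to force $\nabla f(0,0)=(0,0)$ and then invokes ``minimality gives $f_{uu}+f_{vv}=0$'' together with $\mathcal K=f_{uu}f_{vv}-f_{uv}^2$, concluding $(A_1,A_2)=\pm(a_1,a_2)$. You keep the honest quasilinear relation $(1+q^2)f_{uu}-2pq\,f_{uv}+(1+p^2)f_{vv}=0$, which is the more careful route: subtracting the linear part does not preserve minimality, so the de-linearized functions inherit the weighted trace identity from the original equation, not the Laplace equation, and the paper's step $f_{uu}+f_{vv}=0$ is justified only when $\nabla f(0,0)=0$ to begin with.

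The obstacle you flag is genuine, and it afflicts the paper's proof as much as your own. With $r:=f_{uv}(0)$ and common gradient $(p,q)$, the two solutions of your line--hyperbola system are $a$ and $\tfrac{2pq\,r}{1+q^2}-a$, which are negatives of one another only when $pqr=0$; and even then the branch $(A_1,A_2)=(-a_1,-a_2)$ gives $D^2f^\ast=-D^2f^\diamond$ (equivalently $D^2(f^\ast+f^\diamond)=0$) only if additionally $r=0$, since both off-diagonal entries equal $r$ rather than $\pm r$ --- a point the paper's final ``i.e.'' also glosses over. So the stated dichotomy really requires $f_{uv}(0)=0$; your reading that the preliminary rotation supplies this (cf.\ the phrase ``mixed vanishing derivatives'' in the proof of Theorem~\ref{kater}) is the only way to make the lemma match its intended use, and your proof in that regime is correct and complete. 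Under hypotheses (i)--(iii) alone, however, neither your argument nor the paper's proves the lemma: for $pqr\neq0$ one can take $A_1=\tfrac{2pq\,r}{1+q^2}-a_1$ and obtain admissible $2$-jets satisfying (i)--(iii) with $D^2f^\ast\neq\pm D^2f^\diamond$, so the statement needs the extra hypothesis $f_{uv}(0)=0$ (or a vertical-normal normalization) to be true. You have therefore not closed the gap, but you have correctly located it, and it is the paper's gap too.
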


\begin{proof}
The equality of the normals gives the equality of the gradients:
\[
\nabla f^\ast(0,0)=\nabla f^\diamond(0,0)=({ \alpha,\beta}).
\]
Subtract their common linear part ${ \alpha}u+{ \beta}v$.
This does not change the second derivatives. The new graphs also have the same Gaussian curvatures at the point $(0,0)$. Still,  denote the new functions  by  the old symbols $f^\ast$ and $f^\diamond$. Then, 
\[
\nabla f^\ast(0,0)=\nabla f^\diamond(0,0)=(0,0).
\]

Write
\[
a_1=f^\ast_{uu},\quad a_2=f^\ast_{vv},\quad c=f^\ast_{uv}=f^\diamond_{uv},
\]
\[
A_1=f^\diamond_{uu},\qquad A_2=f^\diamond_{vv}.
\]

At a point where $f_u=f_v=0$, the minimality gives
\[
f_{uu}+f_{vv}=0.
\]
Hence,
\[
a_1+a_2=0,\qquad A_1+A_2=0,
\]
thus, $a_2=-a_1$ and $A_2=-A_1$.

At a point with the zero gradient, the Gaussian curvature is
\[
{\mathcal K}=f_{uu}f_{vv}-f_{uv}^2.
\]
The equalities of the curvatures and of the mixed derivatives give
\[
a_1a_2-c^2 = A_1A_2-c^2,
\]
hence,
\[
a_1a_2 = A_1A_2.
\]
Using $a_2=-a_1$ and $A_2=-A_1$,
\[
-a_1^2=-A_1^2,
\]
so $A_1=\pm a_1$, and thus $A_2=\pm a_2$ with the same sign.

Either $(A_1,A_2)=(a_1,a_2)$ or $(A_1,A_2)=(-a_1,-a_2)$, i.e.
\[
D^2 f^\diamond = D^2 f^\ast
\quad\text{or}\quad
D^2 f^\diamond = -\,D^2 f^\ast.
\]
Thus, $D^2(f^\ast\pm f^\diamond)(0,0)=0$.
\end{proof}

\section*{Acknowledgements}

The authors would like to express their sincere gratitude to Professors Daoud Bshouty and Abdallah Lyzzaik for many fruitful and insightful discussions on the subject, which greatly contributed to the development of this work. Their expertise and guidance have been invaluable throughout the preparation of this paper.
V.D. thanks the Simons Foundation grant no.~854861, the Serbian Ministry of Science, Technological Development and Innovation and the Science Fund of Serbia grant IntegraRS. 
D.K. gratefully acknowledges financial support from the grant “Mathematical Analysis, Optimisation and Machine Learning.”

\end{document}